\documentclass[11pt]{amsart}
\usepackage{latexsym,amsmath,amssymb,amsfonts,amscd,graphics,appendix,amsxtra}
\usepackage[mathscr]{eucal}
\usepackage{fullpage}
\usepackage{xr-hyper}
\usepackage{tikz-cd}

\usepackage{hyperref}

\newtheorem{theorem}{Theorem}[section]
\newtheorem{proposition}[theorem]{Proposition}
\newtheorem{lemma}[theorem]{Lemma}

\newtheorem{corollary}[theorem]{Corollary}

\newtheorem{D}[theorem]{Definition}
\newenvironment{definition}{\begin{D} \rm }{\end{D}}
\newtheorem{R}[theorem]{Remark}
\newenvironment{remark}{\begin{R}\rm }{\end{R}}
\newtheorem{E}[theorem]{Example}

\numberwithin{equation}{section}

\newcommand{\Ar}{\mathbb{R}}
\newcommand{\Cee}{\mathbb{C}}

\newcommand{\Ker}{\operatorname{Ker}}

\newcommand{\Hom}{\operatorname{Hom}}
\newcommand{\Ext}{\operatorname{Ext}}

\newcommand{\im}{\operatorname{Im}}

\newcommand{\Spec}{\operatorname{Spec}}

\newcommand{\scrO}{\mathcal{O}}

\newcommand{\hY}{\widehat{Y}}
 \newcommand{\ccdot}{{\scriptscriptstyle{\bullet}}}
\newcommand{\uOb}{\underline\Omega^\ccdot}
\newcommand{\uOp}{\underline\Omega^p}

\title{Unobstructed deformations for singular Calabi-Yau varieties}

\begin{document}
\author[R. Friedman]{Robert Friedman}
\email{rf@math.columbia.edu}
\address{Columbia University, Department of Mathematics, New York, NY 10027}

\subjclass{14J32 (Primary) 14D15, 32G05 (Secondary)}

\begin{abstract} Let $Y$ be  a compact Gorenstein analytic space with only isolated singularities and trivial dualizing sheaf. A recent paper of Imagi studies the deformation theory of $Y$ in case the singularities of $Y$ are weighted homogeneous and rational and $Y$ is K\"ahler. In this note, assuming that $H^1(Y;\scrO_Y) =0$, we generalize Imagi's results to the case where the singularities of $Y$ are Du Bois, with no assumption that they be weighted homogeneous, and where the K\"ahler assumption is replaced by the hypothesis that there is a resolution of singularities of $Y$ satisfying the $\partial\bar\partial$-lemma. As a consequence, if the singularities of $Y$ are additionally local complete intersections, then the deformations of $Y$ are unobstructed. The   log Calabi-Yau and Fano cases are also discussed.
\end{abstract}

\bibliographystyle{amsalpha}
\maketitle

\section{Introduction}

Let $M$ be a compact complex manifold. The nearby deformations of the complex structure on $M$ are described by Kuranishi's theorem: there exists a power series $\Psi$, the \textsl{Kuranishi map},  defined in a neighborhood of the origin in $H^1(M;T_M)$ and with values in $H^2(M;T_M)$, such that $\Psi(0) =0$ and the deformation space of complex structures on $M$, viewed as a germ, is isomorphic to the germ $(\Psi^{-1}(0),0)$. If $\Psi=0$, the deformations of $M$ are said to be \textsl{unobstructed}. The Bogomolov-Tian-Todorov theorem says that, if $M$ is a Calabi-Yau manifold, then  the deformations of $M$ are unobstructed. A very short proof, using only the property that $K_M\cong \scrO_M$ and the degeneration of the Hodge-de Rham spectral sequence, can be given by applying the $T^1$ lifting property of   Ran \cite{Ran-1}, Kawamata \cite{Kawamata}, and Deligne (unpublished).

There are analogues of $H^1(M;T_M)$ and  $H^2(M;T_M)$ in case the complex manifold is replaced by a  compact analytic space $Y$, namely the vector spaces $\mathbb{T}^1_Y$ and $\mathbb{T}^2_Y$. There is again a Kuranishi map, in this case a power series $\Psi$ defined in a neighborhood of the origin in $\mathbb{T}^1_Y$ and with values in $\mathbb{T}^2_Y$, such that $\Psi(0) =0$ and the deformation space of   $Y$, viewed as a germ, is isomorphic to the germ $(\Psi^{-1}(0),0)$. If $\Psi=0$, the deformations of $Y$ are again said to be \textsl{unobstructed}. If $Y$ is singular,  there can be local obstructions to deforming the singularities as well as global ones. If however  the singularities of $Y$ are local complete intersection (lci) singularities, then there are no local obstructions to deforming them. 

By analogy with the case of Calabi-Yau manifolds, we make the following definition:

\begin{definition}\label{defineCY}  A \textsl{singular Calabi-Yau variety} $Y$ is  a (connected) compact Gorenstein singular analytic space with trivial dualizing sheaf $\omega_Y$, i.e.\ $\omega_Y \cong \scrO_Y$.
\end{definition}

If $Y$ is a singular Calabi-Yau variety, it is often important to know if the deformations of $Y$ are unobstructed. One  motivation is to construct smoothings of $Y$. Note however that the property that the deformations of $Y$ are unobstructed does not necessarily imply that $Y$ is smoothable (and conversely, the existence of a smoothing  does not necessarily imply that the deformations of $Y$ are unobstructed). To prove smoothing results given that the deformations of $Y$ are unobstructed, one typically has to know more about the image of $\mathbb{T}^1_Y$ in $H^0(Y; T^1_Y)$, where $T^1_Y$ is a sheaf on $Y$ supported on the singular locus which measures the first order change in deformations around the singularities. Some results along these lines may be found in \cite{FL} and \cite{Tenie}.

 A  basic result in the deformation theory of singular Calabi-Yau varieties  is the following result of  Kawamata \cite{Kawamata}, Ran \cite{Ran}, and Tian \cite{Tian}:

\begin{theorem}\label{thmK} Let $Y$ be a singular Calabi-Yau variety with only ordinary double points such that $\omega_Y\cong \scrO_Y$ and there exists a resolution $\hY \to Y$ which is K\"ahler. Then the deformations of $Y$ are unobstructed. 
\end{theorem}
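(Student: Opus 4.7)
The plan is to invoke the $T^1$-lifting property of Ran, Kawamata, and Deligne mentioned in the introduction: $Y$ has unobstructed deformations provided that for each small surjection of Artinian local $\Cee$-algebras $A \twoheadrightarrow A'$ (one whose kernel is annihilated by the maximal ideal) and each deformation $Y_A$ of $Y$ over $A$, the natural base-change map
\[
\mathbb{T}^1_{Y_A/A} \otimes_A A' \longrightarrow \mathbb{T}^1_{Y_{A'}/A'}
\]
is surjective; equivalently, each $\mathbb{T}^1_{Y_A/A}$ is $A$-free of rank $\dim_\Cee \mathbb{T}^1_Y$. Because an ODP is a hypersurface singularity $\sum x_i^2 = 0$ (hence lci and Du Bois), local obstructions vanish, and $\mathbb{T}^1_Y$ is governed by the local-to-global Ext spectral sequence, fitting into an exact sequence of the form
\[
0 \to H^1(Y; T_Y) \to \mathbb{T}^1_Y \to H^0(Y; T^1_Y) \to H^2(Y; T_Y) \to \mathbb{T}^2_Y.
\]

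The next step is to re-express $\mathbb{T}^1_Y$ (and its relative version $\mathbb{T}^1_{Y_A/A}$) as a Hodge-theoretic invariant. The Calabi-Yau trivialization $\omega_Y \cong \scrO_Y$ yields, via contraction with a nowhere-vanishing section, an isomorphism $T_Y \cong \Omega^{n-1}_Y$ on the smooth locus, where $n = \dim Y$; together with Grothendieck-Serre duality and the Du Bois complex $\uOb_Y$, this rewrites $\mathbb{T}^1_Y$ as a piece of a Hodge cohomology group. Because ODPs are rational and Du Bois, the comparison map $R\pi_*\Omega^\bullet_{\hY} \to \uOb_Y$ is a quasi-isomorphism, so the computation descends to $\hY$; the K\"ahler hypothesis forces the Hodge-de Rham spectral sequence of $\hY$ to degenerate, pinning the relevant dimensions down as topological invariants.

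To run this in families, I would construct for each Artinian base $A$ a simultaneous K\"ahler resolution $\hY_A \to Y_A$: either by extending a small resolution (using that the smooth Calabi-Yau manifold $\hY$ has unobstructed deformations by Bogomolov-Tian-Todorov and matching them to $Y_A$ through the forgetful map $\text{Def}(\hY) \to \text{Def}(Y)$) or by blowing up along the deforming singular loci. Stability of the K\"ahler condition under small deformations plus Deligne's theorem on invariance of Hodge numbers in smooth proper K\"ahler families then gives $A$-flatness of $\mathbb{T}^1_{Y_A/A}$, and $T^1$-lifting concludes.

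The main obstacle is the relative identification of $\mathbb{T}^1_{Y_A/A}$ with the appropriate Hodge group on $\hY_A$ uniformly in $A$. This is delicate because in an infinitesimal deformation $Y_A$ some nodes may smooth while others persist, so the exceptional locus of $\hY_A \to Y_A$ shrinks with $A$. Tracking how the local smoothing parameters contributing to $H^0(Y;T^1_Y)$ pair with exceptional Hodge classes on $\hY$ --- in the threefold case, the primitive $(n{-}1, n{-}1)$-components supported on the exceptional curves --- is the Hodge-theoretic core of the Kawamata-Ran-Tian proof, and making this matching robust under nilpotent extensions is where the real work lies.
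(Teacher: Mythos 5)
Your overall frame ($T^1$-lifting, converting the problem into a Hodge-theoretic statement, exploiting $\omega_Y\cong\scrO_Y$ and degeneration on a K\"ahler resolution) is the right one, but the load-bearing step of your plan --- ``construct for each Artinian base $A$ a simultaneous K\"ahler resolution $\hY_A\to Y_A$'' --- is exactly the step that cannot be carried out, and you have in effect flagged this yourself in your last paragraph. If $Y_A$ smooths a node to first order, no resolution of $\mathcal{Y}=Y_A$ restricts to a resolution of $Y$: in dimension $\ge 4$ an ordinary double point admits no small resolution at all, and in dimension $3$ the exceptional $\Pee^1$ of a small resolution (normal bundle $\scrO(-1)\oplus\scrO(-1)$) obstructs precisely the smoothing directions, while blowing up the ``deforming singular locus'' does not produce a flat family of smooth resolutions. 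Likewise, invoking Deligne's invariance of Hodge numbers for ``smooth proper K\"ahler families'' to get freeness of $\mathbb{T}^1_{Y_A/A}$ is inapplicable, since $Y_A\to\Spec A$ is not smooth; and freeness of the full $\mathbb{T}^1_{Y_A/A}$ is in any case more than is needed or available, because the local pieces $H^0(Y;T^1_Y)$ genuinely change along the deformation.

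The way the argument actually closes (in Kawamata--Ran--Tian, and in the generalization proved in this paper) is to never resolve the family at all. One reduces $T^1$-lifting to $T^2$-injectivity, i.e.\ to the injectivity of $\Ext^2(\Omega^1_Y,\scrO_Y)\to\Ext^2(\Omega^1_{\mathcal{Y}/\Spec A},\scrO_{\mathcal{Y}})$, which by Serre/relative duality (using $\omega_{\mathcal{Y}/\Spec A}\cong\scrO_{\mathcal{Y}}$) is dual to surjectivity of the restriction $H^{n-2}(\mathcal{Y};\Omega^1_{\mathcal{Y}/\Spec A})\to H^{n-2}(Y;\Omega^1_Y)$. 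That surjectivity is then obtained by factoring through the \emph{topological} group $H^{n-1}(Y;\Cee)$, which is literally the same for $\mathcal{Y}$ and $Y$ since an infinitesimal deformation does not change the underlying space: one shows $H^{n-1}(Y;\Cee)\to\mathbb{H}^{n-1}(\{\scrO_Y\to\Omega^1_Y\})\cong H^{n-2}(Y;\Omega^1_Y)$ is surjective using Hodge theory only on the fixed resolution $\hY$ of the central fiber (via $E_1$-degeneration for $\Omega^\bullet_{\hY}(\log E)(-E)$ and the vanishing $R^i\pi_*\scrO_{\hY}(-E)=0$ coming from the rational/Du Bois nature of the node), and the same map factors through $\mathbb{H}^{n-1}(\{\scrO_{\mathcal{Y}}\to\Omega^1_{\mathcal{Y}/\Spec A}\})$, which receives a surjection from $H^{n-2}(\mathcal{Y};\Omega^1_{\mathcal{Y}/\Spec A})$. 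This bypasses entirely the matching of local smoothing parameters with exceptional classes that you identify as the ``real work''; I would redirect your effort to this duality-plus-topology mechanism rather than to relative resolutions.
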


The proofs  are based on the $T^1$ lifting property as well as (for the proof in \cite{Ran}) the  related $T^2$ injective property of \cite{Ran0}. Here, the hypothesis that there is a K\"ahler resolution of $Y$ could be replaced by: there exists a resolution of $Y$ satisfying the $\partial\bar\partial$-lemma.  Note that Ran stated his result for somewhat more general singularities. However, there are several points in the papers \cite{Ran} and \cite{Tian} which need clarification, and the method of \cite{Tian} seems to work only in dimension $3$.

In dimension $3$, there is the following result of  Namikawa \cite{namtop},  which allows more complicated singularities:

\begin{theorem}\label{thmN} Let $Y$ be a singular Calabi-Yau variety of dimension $3$,  all of whose singular points are  isolated rational  lci singularities,    such that $H^1(Y;\scrO_Y) = 0$, and such that there exists a resolution $\hY \to Y$ which is K\"ahler. Then the deformations of $Y$ are unobstructed.
\end{theorem}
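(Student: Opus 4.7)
The plan is to apply the $T^1$-lifting criterion of Ran--Kawamata--Deligne. To show that the Kuranishi map $\Psi$ vanishes identically, it suffices to prove that for every $n\ge 1$ and every infinitesimal deformation $Y_n$ of $Y$ over the Artin ring $A_n=\Cee[t]/(t^{n+1})$, the natural restriction map
\[
r_n\colon \mathbb{T}^1_{Y_n/A_n} \longrightarrow \mathbb{T}^1_{Y_{n-1}/A_{n-1}}
\]
is surjective. Since the singularities are lci, they deform without local obstruction, so such $Y_n$ actually exists for every $n$; the only issue is global.

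Next I would try to reinterpret $\mathbb{T}^1$ Hodge-theoretically. In the smooth Calabi--Yau case of dimension $d$, contraction with the nonvanishing volume form identifies $T_M\cong \Omega^{d-1}_M$, and surjectivity of $r_n$ reduces to base change for $R^1 f_{n*}\Omega^{d-1}_{M_n/A_n}$, which follows from Hodge--de Rham degeneration. For our singular $Y$ I would aim for an analogous identification
\[
\mathbb{T}^1_{Y_n/A_n} \cong \mathbb{H}^1\bigl(Y_n, \underline{\Omega}^{d-1}_{Y_n/A_n}\bigr),
\]
with $d=3$ and $\underline{\Omega}^\bullet$ the Du Bois complex. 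Two ingredients feed into this. First, rational singularities are Du Bois (Kov\'acs, Koll\'ar, Saito), so the Hodge-to-de Rham spectral sequence $E_1^{p,q}=\mathbb{H}^q(Y,\underline{\Omega}^p_Y)\Rightarrow H^{p+q}(Y;\Cee)$ degenerates at $E_1$, using that $\hY$ satisfies the $\partial\bar\partial$-lemma; this degeneration propagates to the family $Y_n/A_n$ and gives the required Hodge-theoretic base change. Second, the lci hypothesis forces $\mathcal{E}xt^{\geq 2}(\Omega^1_Y,\scrO_Y)=0$, so the local-to-global spectral sequence for $\mathbb{T}^1_Y$ collapses into a short exact sequence involving only $H^1(Y,T_Y)$ and $H^0(Y,T^1_Y)$, both of which feed into the Hodge-theoretic comparison. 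The Calabi--Yau identity $\omega_Y\cong\scrO_Y$ extends to $\omega_{Y_n/A_n}\cong\scrO_{Y_n}$ by flatness, so the volume-form contraction still makes sense along the tower.

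The main obstacle will be setting up the identification $\mathbb{T}^1_{Y_n/A_n}\cong\mathbb{H}^1(Y_n,\underline{\Omega}^{d-1}_{Y_n/A_n})$ uniformly in $n$: comparing the tangent complex of $Y_n$ to the \emph{relative} Du Bois complex is delicate in the singular, family setting, and one needs the Du Bois formalism to behave well under base change to $A_n$. The dimension-$3$ and $H^1(Y;\scrO_Y)=0$ hypotheses enter precisely here, to kill nuisance terms in the spectral-sequence comparison: a nonzero $H^1(Y;\scrO_Y)$ would spoil the identification of $\mathbb{H}^1(Y,\underline{\Omega}^{d-1}_Y)$ with a clean piece of the Hodge filtration on $H^d(Y;\Cee)$, while the rational lci assumption makes the contribution of $T^1_Y$ to $\mathbb{T}^1_Y$ sufficiently tame. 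Once that comparison is in place, surjectivity of $r_n$ follows formally from Hodge degeneration combined with flat base change over $A_n$, and the $T^1$-lifting criterion finishes the proof.
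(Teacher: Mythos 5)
Your outer frame ($T^1$-lifting) is the right one, but the central step of your plan --- the identification $\mathbb{T}^1_{Y_n/A_n}\cong\mathbb{H}^1(Y_n,\underline{\Omega}^{2}_{Y_n/A_n})$ followed by ``formal'' base change from Hodge--de Rham degeneration --- is both unestablished and not the right target, and this is a genuine gap rather than a technicality. Two concrete problems. First, for singular $Y$ the space $T^1(\mathcal{Y}_k/A_k)\cong\Ext^1(\Omega^1_{\mathcal{Y}_k/\Spec A_k},\scrO_{\mathcal{Y}_k})$ contains the local contribution $H^0(Y;T^1_Y)=H^0(\mathcal{E}xt^1(\Omega^1_Y,\scrO_Y))$ recording first-order deformations of the singular points; contraction with the volume form identifies $T_Y$ with an $\Omega^{n-1}$-type sheaf only on the smooth locus, and the Du Bois complex of $Y$ simply does not see $T^1_Y$. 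Second, even granting some identification, deducing surjectivity of $r_n$ ``formally from Hodge degeneration combined with flat base change'' requires the relevant cohomology to be a free $A_n$-module commuting with base change; for general rational lci singularities this fails --- that is exactly why the earlier result of Friedman--Laza needs the stronger $1$-Du Bois hypothesis, and why for merely rational (equivalently, Du Bois) lci singularities one cannot run a full base-change argument.

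The argument that works (Namikawa's, reproduced and generalized in this paper) dualizes instead of identifying $\mathbb{T}^1$ Hodge-theoretically. From the long exact Ext sequences, $T^1$-lifting follows from \emph{$T^2$-injectivity}: injectivity of $\Ext^2(\Omega^1_Y,\scrO_Y)\to\Ext^2(\Omega^1_{\mathcal{Y}_k/\Spec A_k},\scrO_{\mathcal{Y}_k})$. Using $\omega_Y\cong\scrO_Y$, $\omega_{\mathcal{Y}_k/\Spec A_k}\cong\scrO_{\mathcal{Y}_k}$ and (relative) Serre duality, this is equivalent to surjectivity of the single restriction map $H^{n-2}(\mathcal{Y}_k;\Omega^1_{\mathcal{Y}_k/\Spec A_k})\to H^{n-2}(Y;\Omega^1_Y)$ (here $n=3$, so $H^1$ of $\Omega^1$). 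That surjectivity is then a soft Hodge-theoretic statement: one shows the topological map $H^{n-1}(Y;\Cee)\to\mathbb{H}^{n-1}(\{\scrO_Y\to\Omega^1_Y\})\cong H^{n-2}(Y;\Omega^1_Y)$ is surjective --- using that the singularities are Du Bois, $E_1$-degeneration for $\Omega^\bullet_{\hY}(\log E)(-E)$ computing $H^*(Y,Z;\Cee)$, the vanishing $H^{n-1}(Y;\scrO_Y)=0$, and the reflexivity of $\Omega^1_Y$ in the lci case --- and observes that this map factors through $\mathbb{H}^{n-1}(\{\scrO_{\mathcal{Y}_k}\to\Omega^1_{\mathcal{Y}_k/\Spec A_k}\})$, forcing the desired surjection in that one degree with no base-change hypotheses. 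If you want to salvage your write-up, you should replace the $\Omega^{d-1}$/volume-form identification by this duality step, and replace the base-change claim by the single surjectivity statement proved topologically.
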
 

As with Theorem~\ref{thmK}, we can relax the condition that there exist a K\"ahler resolution to the condition that there exists a resolution  satisfying the $\partial\bar\partial$-lemma. In \cite[Theorem 3.7]{FL25a}, this result has been partially generalized to the case of Du Bois singularities. There are also related results of  Gross \cite{gross_defcy}, where $Y$ has rational but not necessarily lci singularities, which connect  the deformation theory  of $Y$ to the local obstructions to deformations of the isolated singular points.

In higher dimensions, the author and Radu Laza have proved the following generalization of Theorem~\ref{thmK} \cite[Corollary 1.5]{FL22c}:

\begin{theorem}\label{unob11} Let $Y$ be a singular Calabi-Yau variety of dimension $n$ such that all singularities of $Y$ are $1$-Du Bois lci singularities and such that there exists a K\"ahler resolution of singularities of $Y$.   Then the deformations of $Y$ are unobstructed.
\end{theorem}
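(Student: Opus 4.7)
The plan is to verify the $T^1$-lifting property of Ran, Kawamata, and Deligne. Set $A_n = \Cee[t]/(t^{n+1})$ and let $Y_n \to \Spec A_n$ be an arbitrary formal deformation of $Y$. Unobstructedness will follow once one shows that, for every $n \geq 0$, the relative deformation module $\mathbb{T}^1_{Y_n/A_n}$ is free over $A_n$ of rank $\dim_{\Cee}\mathbb{T}^1_Y$; by semicontinuity this reduces to establishing the lower bound
\begin{equation*}
\dim_{\Cee}\mathbb{T}^1_{Y_n/A_n} \geq (n+1)\dim_{\Cee}\mathbb{T}^1_Y.
\end{equation*}

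The first step is a Hodge-theoretic interpretation of $\mathbb{T}^1$. Since the singularities are lci, the cotangent complex of $Y$ is quasi-isomorphic to $\Omega^1_Y$ in degree $0$, so $\mathbb{T}^1_Y = \Ext^1(\Omega^1_Y,\scrO_Y)$, and analogously in families $\mathbb{T}^1_{Y_n/A_n} = \Ext^1_{Y_n}(\Omega^1_{Y_n/A_n},\scrO_{Y_n})$. The $1$-Du Bois hypothesis provides a quasi-isomorphism $\Omega^1_Y \simeq \underline{\Omega}^1_Y$, and the trivialization $\omega_Y \cong \scrO_Y$ together with a contraction-by-$\omega_Y$ argument yields an identification of the shape
\begin{equation*}
\mathbb{T}^1_{Y_n/A_n} \cong \mathbb{H}^{n-1}\bigl(Y_n;\underline{\Omega}^1_{Y_n/A_n}\bigr),
\end{equation*}
together with the analogous identification for the special fibre $Y$.

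The second step is to prove that this hypercohomology is $A_n$-flat of the expected rank. Here the $\partial\bar\partial$-lemma on a resolution $\hY \to Y$ is invoked to force degeneration at $E_1$ of the Hodge-to-de Rham spectral sequence built from $\uOb_Y$ in the relevant bidegrees. One then propagates this degeneration to the Artinian thickening $Y_n$ via the standard principle that $E_1$-degeneration together with flatness of the $E_1$ sheet over the base is preserved under infinitesimal deformation, producing the required $A_n$-flatness of the abutment.

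The main obstacle, as I see it, is controlling the behaviour of the filtered Du Bois complex in families of singular spaces: one must check that $\underline{\Omega}^1_{Y_n/A_n}$ forms a well-behaved family of complexes and that its hypercohomology satisfies flat base change down to $Y$. The combination of $1$-Du Bois and lci is precisely what makes this tractable, since it lets one replace the a priori unwieldy Du Bois complex in families by the relative K\"ahler differentials $\Omega^1_{Y_n/A_n}$, for which standard lci deformation theory and cohomology-and-base-change apply. Granted this, the dimension count against the degenerating Hodge filtration gives the required lower bound, completing the $T^1$-lifting verification and hence unobstructedness.
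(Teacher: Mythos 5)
Your overall architecture is the right one and matches the source: the paper does not actually prove Theorem~\ref{unob11} here but quotes it from \cite{FL22c}, and the one-sentence sketch it gives of that proof is essentially your plan --- show that $H^{i}(\mathcal{Y};\Omega^1_{\mathcal{Y}/\Spec A})$ is a free $A$-module commuting with base change, transfer this to $T^1(\mathcal{Y}/A)=\Ext^1(\Omega^1_{\mathcal{Y}/\Spec A},\scrO_{\mathcal{Y}})$ using the Calabi--Yau condition, and conclude by $T^1$-lifting. Two corrections to your setup. First, the identification should not be $\mathbb{T}^1_{Y_n/A_n}\cong\mathbb{H}^{n-1}(Y_n;\underline\Omega^1_{Y_n/A_n})$ but a relative Serre duality $\Ext^1(\Omega^1_{\mathcal{Y}/A},\scrO_{\mathcal{Y}})\cong\Hom_A(H^{n-1}(\mathcal{Y};\Omega^1_{\mathcal{Y}/A}),A)$; for this you must also check that the trivialization $\omega_Y\cong\scrO_Y$ extends to $\omega_{\mathcal{Y}/A}\cong\scrO_{\mathcal{Y}}$ over the thickening (the paper notes in Section~3 that this step itself uses the Du Bois hypothesis). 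Since $A_k$ is injective over itself, duality converts freeness and base-change surjectivity of $H^{n-1}$ into the same properties for $\Ext^1$, so this is harmless but needs to be said.

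Second, and more seriously, the step where you ``propagate $E_1$-degeneration to the Artinian thickening via the standard principle that $E_1$-degeneration together with flatness of the $E_1$ sheet over the base is preserved under infinitesimal deformation'' is circular as written: flatness of the $E_1$ terms over $A_n$ is the conclusion you are after, not a hypothesis you possess. The usable principle is that if the abutment $\mathbb{H}^k$ of the relative spectral sequence is $A$-free of rank equal to $\sum_{p+q=k}\dim_{\Cee} E_1^{p,q}(Y)$, then the relative spectral sequence degenerates and each $E_1^{p,q}$ is $A$-free; so everything hinges on producing a relative complex over $A$ whose hypercohomology is topological, i.e.\ isomorphic to $H^k(Y;\Cee)\otimes_\Cee A$. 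For smooth fibers this is the relative holomorphic Poincar\'e lemma, but for singular $\mathcal{Y}$ the complex $\Omega^\bullet_{\mathcal{Y}/A}$ (or a naive relative Du Bois complex) does not compute $H^*(Y;\Cee)\otimes_{\Cee} A$, and constructing a substitute whose first graded piece can be compared with $\Omega^1_{\mathcal{Y}/A}$ is precisely where the $1$-Du Bois lci hypothesis is spent and where the technical weight of \cite{FL22c} lies. You correctly flag this as the main obstacle, but the proposal leaves it as a black box, so as written it is an accurate roadmap rather than a proof.
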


Here, we refer to \cite[Definition 3.6]{FL22c} and the references in that paper for the definition of $1$-Du Bois. In particular, lci $1$-Du Bois singularities are automatically rational, by \cite{FL22d} in the case of isolated lci singularities and by a result of Chen-Dirks-Musta\c{t}\u{a} for a general lci singularity \cite{ChenDirksM}. Note that the proof of Theorem~\ref{unob11} does not in fact require that the singularities be isolated as long as there is a K\"ahler resolution of singularities of $Y$. As with Theorem~\ref{thmN},  if the singularities are isolated, the K\"ahler assumption   can be replaced by the assumption that there exists a resolution  satisfying the $\partial\bar\partial$-lemma. The key point in the proof is that, if $Y$ has $1$-Du Bois lci singularities and under some hypotheses on the resolution, then for any  deformation   $\mathcal{Y}$ of  $Y$ over $\Spec A$, where $A$ is an Artin local $\Cee$-algebra, the $A$-modules $H^i(\mathcal{Y}; \Omega^1_{\mathcal{Y}/\Spec A})$ are free and commute with base change in an appropriate sense. This then implies the $T^1$ lifting property. 

Recently, Y.\ Imagi has posted a paper which proves, among other matters, the following \cite[Theorem 1.2]{Imagi}:

\begin{theorem}\label{Imagithm} Let $Y$ be a singular Calabi-Yau variety, all of whose singularities are isolated weighted homogeneous lci rational singularities, and such that $Y$ is K\"ahler in a strong sense. Then the deformations of $Y$ are unobstructed.
\end{theorem}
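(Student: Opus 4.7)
The strategy is to apply the $T^1$-lifting criterion of Ran-Kawamata-Deligne. Since all singularities of $Y$ are lci, they carry no local obstructions, so one only needs to establish a global lifting statement: for every small extension $A' \twoheadrightarrow A$ of Artinian local $\Cee$-algebras with square-zero kernel and every flat deformation $\mathcal{Y}/\Spec A$ of $Y$, a flat extension to $\Spec A'$ should exist.

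The first step is to invoke Steenbrink's theorem that an isolated weighted homogeneous singularity is automatically Du Bois. Thus the singularities of $Y$ are Du Bois, lci, and rational. Combined with the hypothesis that $Y$ is K\"ahler in a strong sense -- which I would use in the form that there exists a resolution $\hY \to Y$ satisfying the $\partial\bar\partial$-lemma -- this yields an absolute Hodge-theoretic statement on $Y$: the spectral sequence $E_1^{p,q} = H^q(Y; \uOp_Y) \Rightarrow H^{p+q}(Y;\Cee)$ degenerates at $E_1$, and the Hodge filtration on $H^*(Y;\Cee)$ admits a functorial description via the Du Bois complex $\uOb_Y$.

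The heart of the argument is to promote this absolute degeneration to a relative statement along any infinitesimal deformation $\mathcal{Y}/\Spec A$. One constructs a relative Du Bois complex $\underline\Omega^\bullet_{\mathcal{Y}/A}$ and shows that the cohomology modules $H^q(\mathcal{Y}; \underline\Omega^p_{\mathcal{Y}/A})$ are free $A$-modules that commute with base change $A \to \Cee$. As explained in the discussion following Theorem~\ref{unob11}, this freeness-and-base-change property is precisely what drives the $T^1$-lifting property and hence unobstructedness; once it is available, the proof proceeds exactly as for Theorems~\ref{thmK} and~\ref{unob11}.

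The hard part is the local analysis of the relative Du Bois complex near each singular point, and this is where weighted homogeneity is decisive. At an isolated weighted homogeneous singularity $(V,v)$, the $\Cee^*$-action decomposes all local deformation-theoretic and Hodge-theoretic data into weight eigenspaces, and one has an equivariant embedded resolution and an equivariant explicit model for $\underline\Omega^\bullet_V$ coming from Steenbrink's work on weighted homogeneous singularities. I would use these to build an equivariant local model for $\mathcal{Y}$ near each singular point -- available since the local deformation functor is smooth for an lci singularity -- and then verify freeness and base change of $H^q(\mathcal{Y}; \underline\Omega^p_{\mathcal{Y}/A})$ weight-by-weight, reducing the local question to the known absolute case. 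Patching with the smooth locus, where $\underline\Omega^p_{\mathcal{Y}/A} = \Omega^p_{\mathcal{Y}/A}$ holds tautologically, delivers the required global freeness and completes the $T^1$-lifting argument.
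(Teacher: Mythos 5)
Your high-level frame ($T^1$ lifting, no local obstructions since the singularities are lci) is sound, but the step you place at the heart of the argument is precisely the one that cannot work in this generality. You propose to show that all the modules $H^q(\mathcal{Y};\underline\Omega^p_{\mathcal{Y}/A})$ --- in particular $H^i(\mathcal{Y};\Omega^1_{\mathcal{Y}/\Spec A})$ for \emph{every} $i$ --- are free over $A$ and commute with base change. That is exactly the mechanism behind Theorem~\ref{unob11}, and it is tied to the $1$-Du Bois hypothesis, not merely to Du Bois/rational; the introduction of the paper singles out this point as the reason Theorem~\ref{unob11} does not already cover Imagi's theorem. Weighted homogeneous lci rational singularities need not be $1$-Du Bois (e.g.\ the cone over a smooth cubic surface in $\Pee^3$, with minimal exponent $4/3$), and for such singularities the global freeness/base-change statement is not available and should be expected to fail. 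The correct weakening --- which is what both Imagi and this paper actually establish --- is much smaller: one only needs the single restriction map $H^{n-2}(\mathcal{Y};\Omega^1_{\mathcal{Y}/\Spec A})\to H^{n-2}(Y;\Omega^1_Y)$ to be surjective (Theorem~\ref{surjthm}); relative Serre duality with $\omega_{\mathcal{Y}/\Spec A}\cong\scrO_{\mathcal{Y}}$ then converts this into injectivity of $\Ext^2(\Omega^1_Y,\scrO_Y)\to\Ext^2(\Omega^1_{\mathcal{Y}_k/\Spec A_k},\scrO_{\mathcal{Y}_k})$, i.e.\ $T^2$ injectivity, which yields $T^1$ lifting. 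Your proposal never isolates this weaker statement, and without it the argument does not close.

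The local step is also not viable as described. A general deformation $\mathcal{Y}$ of $Y$ over $\Spec A$ is not $\Cee^*$-equivariant near a weighted homogeneous singular point: the local deformation space decomposes into weight spaces with nonzero (including negative) weights, so there is no ``equivariant local model'' for an arbitrary $\mathcal{Y}$, and smoothness of the local deformation functor does not produce one. Moreover, freeness of the global cohomology modules is a genuinely global statement that cannot be checked weight-by-weight near the singular points and then ``patched'' with the smooth locus; the obstruction lives in the interaction between local cohomology along $Z$ and the global topology of $Y$. This is why the paper works instead with the pair $(Y,Z)$ (equivalently $(\hY,E)$), the truncated complexes $\mathcal{C}^\bullet_Y=\{\scrO_Y\to\Omega^1_Y\}$ and $\{\scrO_{\hY}(-E)\to\Omega^1_{\hY}(\log E)(-E)\}$, and the surjection $H^{n-1}(Y;\Cee)\to\mathbb{H}^{n-1}(\mathcal{C}^\bullet_Y)\cong H^{n-2}(Y;\Omega^1_Y)$, which visibly factors through $\mathbb{H}^{n-1}(\mathcal{C}^\bullet_{\mathcal{Y}})$. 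For the record: the statement you were asked to prove is quoted from Imagi, whose own proof is analytic (glued model cone metrics and harmonic theory, with weighted homogeneity used to build the metrics), while the present paper's Theorem~\ref{mainthm} recovers it under the extra hypothesis $H^1(Y;\scrO_Y)=0$ but with no weighted homogeneity at all; and Steenbrink's theorem is that isolated \emph{rational} singularities are Du Bois --- weighted homogeneity alone does not give Du Bois.
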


Imagi also deals with the case where the singularities are not necessarily lci along the lines of \cite{gross_defcy}. 

The hypotheses of Theorem~\ref{Imagithm} are not ideal. The assumption of weighted homogeneous singularities is very restrictive, and the K\"ahler condition seems hard to verify unless $Y$ is actually projective. The method of proof involves choosing model metrics in a neighborhood of the singular points and gluing them in to form a new metric on $Y$. The proof then proceeds via harmonic analysis using the new metric, and finally there is a discussion of deformation theory over Artin local $\Ar$-algebras, so as to extend the K\"ahler metric to deformations over such an $\Ar$-algebra. It seems hard to extend these methods to a more general setting. Nonetheless, there are two insights that can be gleaned from his approach:

\begin{enumerate}
\item  In proving $T^1$ lifting or   $T^2$ injectivity,  the $1$-Du Bois hypothesis is   too strong. It is not important that 
$H^i(\mathcal{Y}; \Omega^1_{\mathcal{Y}/\Spec A})$ is a locally free $A$-module and commutes with base change for \emph{all}  $i$. In the lci case, the main necessary ingredient   is  that the map
$H^{n-2}(\mathcal{Y}; \Omega^1_{\mathcal{Y}/\Spec A}) \to H^{n-2}(Y; \Omega^1_Y)$
is surjective. In the non lci case, one considers instead the map 
$$H^{n-2}(\mathcal{Y}; \Omega^1_{\mathcal{Y}/\Spec A}) \to H^{n-2}(Y; \Omega^1_Y)/\im  H^{n-2}_Z(Y; \Omega^1_Y),$$
where  $Z$ is the  singular locus of $Y$.  In dimension $3$, this idea already appears in \cite[Proof of Theorem 1]{namtop} in the lci case and in   \cite{gross_defcy} more generally. 
\item It is useful to look at compactly supported cohomology, which in this case is the relative cohomology $H^i(Y,Z)$, where  $Z$ is again the  singular locus of $Y$.  Given a good resolution $\pi\colon \hY \to Y$ with exceptional set $E =\pi^{-1}(Z)$, $H^i(Y,Z)$ is the same as the relative cohomology $H^i(\hY, E)$.  In this setting, $H^i(\hY, E)$ carries a mixed Hodge structure and we can use standard  methods to prove $E_1$ degeneration for an appropriate spectral sequence.
\end{enumerate} 

Armed with these extra ideas, we first work under the simplifying assumption of local complete intersection singularities for the sake of clarity, and prove the following in Section~\ref{section3}:

\begin{theorem}\label{mainthm} Let $Y$ be a singular Calabi-Yau variety of dimension $n \ge 3$, such that:
\begin{enumerate}
\item[\rm(i)]  The $\partial\bar\partial$-lemma holds for some resolution of singularities $\hY$ of $Y$.
\item[\rm(ii)]  The singularities of $Y$ are isolated, lci and Du Bois (i.e.\ $0$-Du Bois).
\item[\rm(iii)]  $H^1(Y; \scrO_Y) = 0$.
\end{enumerate} Then the deformations of $Y$ are unobstructed.
\end{theorem}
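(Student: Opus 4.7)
The plan is to deduce unobstructedness from the $T^2$-injectivity property of Ran--Kawamata, used in the sharpened form advertised in insight (1) of the introduction. Given a deformation $\mathcal{Y}$ of $Y$ over an Artin local $\Cee$-algebra $A$, together with a small extension $0 \to (t) \to A' \to A \to 0$ and a lift $\mathcal{Y}'$, I would first reduce the problem to showing that the natural restriction map
\[ H^{n-2}(\mathcal{Y}; \Omega^1_{\mathcal{Y}/\Spec A}) \longrightarrow H^{n-2}(Y; \Omega^1_Y) \]
is surjective. Because the singularities are lci, the local $T^2$ sheaf $T^2_Y$ vanishes, so there are no local obstructions and the global obstruction lives in a space controlled by the cokernel of this map; hence surjectivity gives $T^2$-injectivity and therefore unobstructedness by a standard argument.

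Next I would translate this to a statement on the resolution $\pi\colon \hY \to Y$ with exceptional divisor $E = \pi^{-1}(Z)$, arranged to be simple normal crossings. The Du Bois hypothesis gives quasi-isomorphisms $\scrO_Y \simeq \underline{\Omega}^0_Y \simeq R\pi_*\scrO_{\hY}$ and provides a usable comparison between $\underline{\Omega}^1_Y$ and $R\pi_*\Omega^1_{\hY}(\log E)$, together with its relative/compactly supported variant built from $R\pi_*\Omega^1_{\hY}(\log E)(-E)$. Following insight (2), I would pass to the relative cohomology $H^\bullet(\hY,E;\Omega^p_{\hY})$, which computes the cohomology of $Y$ supported away from $Z$, carries a natural mixed Hodge structure, and fits into the usual long exact sequence with $H^\bullet(\hY;\Omega^p_{\hY})$ and $H^\bullet(E;\Omega^p_E)$. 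The central technical assertion is the $E_1$-degeneration of the Hodge--de Rham spectral sequence
\[ E_1^{p,q} = H^q(\hY,E;\Omega^p_{\hY}) \Longrightarrow H^{p+q}(\hY,E;\Cee), \]
together with its deformed version on $(\mathcal{Y},Z)$.

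In the absolute case, this degeneration follows from the $\partial\bar\partial$-lemma on $\hY$ combined with the degeneration for both $\hY$ and $E$, propagated through the triangle $\Omega^\bullet_{\hY}(\log E)(-E) \to \Omega^\bullet_{\hY} \to \Omega^\bullet_E$. The hypothesis $H^1(Y;\scrO_Y) = 0$ enters to eliminate contributions from the weight-zero piece of $H^1$ that would otherwise obstruct the comparison between $H^\bullet(\mathcal{Y};\Omega^1_{\mathcal{Y}/A})$ and $H^\bullet(\hY,E;\Omega^1)$ after the $\pi_*$-pushforward. For the deformation $\mathcal{Y}/\Spec A$, I would argue by induction on the length of $A$: assuming freeness and base change for $H^{n-2}(\mathcal{Y};\Omega^1_{\mathcal{Y}/A})$ at the previous step, use the lci assumption (making $\Omega^1_{\mathcal{Y}/A}$ a perfect complex that commutes with base change) and the Du Bois comparison with the deformed resolution to propagate the $E_1$-degeneration one step further.

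The main obstacle, as I see it, is matching the analytic Hodge-theoretic input on $(\hY, E)$ (only the $\partial\bar\partial$-lemma on $\hY$ is postulated) with the purely algebraic deformation-theoretic side on $\mathcal{Y}$. Concretely, one must establish $A$-flatness of $H^{n-2}(\mathcal{Y};\Omega^1_{\mathcal{Y}/A})$ and show it commutes with base change, working only in the degree needed, without the full $1$-Du Bois hypothesis used in Theorem~\ref{unob11}. The $E_1$-degeneration for $(\hY,E)$ in degree $n-2$ and the vanishing $H^1(Y;\scrO_Y)=0$ must together supply just enough rigidity to carry this through; once freeness in degree $n-2$ is in place, a dimension count yields the required surjectivity and the theorem follows.
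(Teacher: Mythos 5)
Your overall skeleton matches the paper: reduce unobstructedness to $T^2$-injectivity, reduce that by duality (using $\omega_Y\cong\scrO_Y$ and $\omega_{\mathcal{Y}/\Spec A}\cong\scrO_{\mathcal{Y}}$) to surjectivity of $H^{n-2}(\mathcal{Y};\Omega^1_{\mathcal{Y}/\Spec A})\to H^{n-2}(Y;\Omega^1_Y)$, and establish the Hodge-theoretic input via the relative complex $\Omega^\bullet_{\hY}(\log E)(-E)$ and the $E_1$-degeneration for $H^\bullet(\hY,E;\Cee)$. But the step you flag as ``the main obstacle'' is a genuine gap, and the route you sketch for it would not work. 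There is no ``deformed resolution'': a resolution of $Y$ does not extend to a resolution of $\mathcal{Y}$ over $\Spec A$, so you cannot propagate the Du Bois comparison or the $E_1$-degeneration to $\mathcal{Y}$. More importantly, proving $A$-freeness and base change of $H^{n-2}(\mathcal{Y};\Omega^1_{\mathcal{Y}/\Spec A})$ by induction on the length of $A$ is exactly the strategy of the $1$-Du Bois theorem (Theorem~\ref{unob11}), and it is precisely what the weaker $0$-Du Bois hypothesis does not supply; the whole point of the present theorem is to avoid that.

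The paper's resolution of this obstacle is different and is the idea you are missing. One never proves flatness or base change for the cohomology module. Instead, set $\mathcal{C}^\bullet_{\mathcal{Y}}=\{\scrO_{\mathcal{Y}}\to\Omega^1_{\mathcal{Y}/\Spec A}\}$. Since $H^{n-1}(Y;\scrO_Y)=0$ forces $H^{n-1}(\mathcal{Y};\scrO_{\mathcal{Y}})=0$, the spectral sequence of the stupid filtration gives a surjection $H^{n-2}(\mathcal{Y};\Omega^1_{\mathcal{Y}/\Spec A})\twoheadrightarrow\mathbb{H}^{n-1}(\mathcal{C}^\bullet_{\mathcal{Y}})$. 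The constant sheaf maps to $\mathcal{C}^\bullet_{\mathcal{Y}}$ for \emph{any} deformation, so the map $H^{n-1}(Y;\Cee)\to\mathbb{H}^{n-1}(\mathcal{C}^\bullet_Y)$ factors through $\mathbb{H}^{n-1}(\mathcal{C}^\bullet_{\mathcal{Y}})$. All the Hodge theory (the $E_1$-degeneration for $(\hY,E)$, Steenbrink's vanishing $R^{n-3}\pi_*\Omega^1_{\hY}(\log E)(-E)=0$ in the lci case, reflexivity of $\Omega^1_Y$, and the vanishing of $d\colon H^{n-2}(Y;\scrO_Y)\to H^{n-2}(Y;\Omega^1_Y)$) is used only on the central fiber, to show that $H^{n-1}(Y;\Cee)\to\mathbb{H}^{n-1}(\mathcal{C}^\bullet_Y)\cong H^{n-2}(Y;\Omega^1_Y)$ is already surjective. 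Chaining these surjections gives the required surjectivity onto $H^{n-2}(Y;\Omega^1_Y)$ with no inductive hypothesis on $\mathcal{Y}$ at all; the only flatness input is that of $\Omega^1_{\mathcal{Y}/\Spec A}$ itself (from the lci assumption), used in the formal base-change step. Without this device your argument cannot close.
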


\begin{remark}   (i) Steenbrink proved in \cite[Proposition 3.7]{Steenbrink} that an isolated rational singularity is Du Bois, so Theorem~\ref{mainthm}  covers the case of rational singularities as a special case. Note that a normal Gorenstein singularity is rational $\iff$ it is canonical \cite[11.1]{Kollar}. By a theorem of Ishii \cite[proof of Theorem 2.3]{Ishii85}, an isolated Gorenstein singularity of dimension at least $2$ is Du Bois $\iff$ it is log canonical. In particular, in (ii) above we could replace ``Du Bois" with ``log canonical." 

\smallskip
\noindent (ii) The assumption $H^1(Y; \scrO_Y) = 0$ is typically satisfied if $Y$ has isolated singularities (but is not smooth). For the case where $Y$ is projective with  rational singularities, this follows from a result of Kawamata \cite[Theorem 8.3]{KawamataFiber}. In this case, an easy argument due to Namikawa (which holds in somewhat greater generality) shows that $\mathbb{T}^0_Y = H^0(Y; T^0_Y) =0$, i.e.\ the deformation functor is prorepresentable.  For completeness, we recall the proof  in  Section~\ref{section3} (Lemma~\ref{Namlemma}).

\smallskip
\noindent (iii) If $Y$ is a compact analytic space with isolated singularities and if there exists  a resolution $\pi\colon \hY \to Y$ satisfying the $\partial\bar\partial$-lemma, then, by \cite[Remark 5.2]{FL},  the same is true for all such resolutions.
\end{remark}

In \S\ref{subsection3.2}, we discuss how to use the results of Section~\ref{section2}  in order for them to cover    the non lci case as well. The  consequences for the deformation theory of $Y$ are recalled in Theorems~\ref{conseqgenT1} and \ref{genT1thm}.

Many of these ideas generalize to the log Calabi Yau case, i.e.\ the case of a log deformation of a pair $(Y,D)$, where $Y$ has isolated Gorenstein Du Bois singularities and $D$ is a simple normal crossing Cartier divisor contained in the smooth locus of $Y$ and such that $\omega_Y \cong \scrO_Y(-D)$, again under the special assumption that $H^1(Y; \scrO_Y) = 0$. In case $Y$ is smooth, there are various unobstructedness results due to, among others, \cite{Ran-1}, \cite{KKP}, \cite{Sano2}, \cite{Iacono}. We give a version of their results in the case of a singular $Y$ in \S\ref{subsection3.3}. For example, the natural analogue of Theorem~\ref{mainthm} holds for the case of a log Calabi-Yau pair (Theorem~\ref{pairmainthm}) with lci singularities. In the Fano case, i.e.\ if $\omega_Y^{-1}$ is ample, then an easy argument shows that, if $Y$ has $1$-Du Bois lci singularities (not necessarily isolated), then $\mathbb{T}^i_Y =0$ for $i \ge 2$ and in particular $ \mathbf{Def}_Y$ is unobstructed \cite[Theorem 4.5]{FL}.  This result has been extended by Tenie to the case where the singularities are isolated Du Bois lci singularities \cite{Tenie}. In \S\ref{subsection3.3},   we show the following (Theorem~\ref{Fanomainthm}): If $Y$ is    a compact   analytic space of dimension at least $3$ with isolated lci rational singularities  such  that $\omega_Y^{-1}$ is nef and big,  then $ \mathbf{Def}_Y$ is  unobstructed.

 \subsection*{Acknowledgements} It is a pleasure to thank Yohsuke Imagi for sharing his preprint \cite{Imagi} and Radu Laza and Johan de Jong for many conversations about this and other subjects. I would especially like to thank Mark Gross for extended correspondence related to the generalized $T^1$ lifting criterion.

\subsection*{Notation and conventions} We work over $\Cee$ with the classical topology. $Y$ will always denote  a compact, reduced and normal  Gorenstein analytic  variety of dimension $n \ge 3$,  with isolated singularities, not necessarily satisfying the Calabi-Yau condition,   $Z$ will denote the singular locus of $Y$ (a finite set), and  $\pi\colon \hY \to Y$  will denote a good resolution of $Y$ with exceptional set $E= \pi^{-1}(Z)$, a divisor with simple normal crossings.

\section{Results from Hodge theory} \label{section2} 

In this section, we  do not assume that  $Y$  necessarily satisfies the  Calabi-Yau condition.

 \subsection{Mixed Hodge structures on relative cohomology}\label{ssect2.1}   A general reference for this section is \cite[\S7.3]{PS}. Let   $\uOb_{Y,Z} = R\pi_*\Omega^\ccdot_{\hY}(\log E)(-E)$ be the relative filtered de Rham complex, with associated graded terms $\uOp_{Y,Z} = R\pi_*\Omega^p_{\hY}(\log E)(-E)$. In degree $0$, $\pi_*\scrO_{\hY}(-E) = I_Z$, the ideal of the subset $Z$. With this notation, the singularities of $Y$ are   \textsl{Du Bois}   if   $R^i\pi_*\scrO_{\hY}(-E) = 0$ for $i >0$.  If $i\colon Z \to Y$ is the inclusion, define $i_!\Cee_{Y-Z}$ as the kernel of the surjection $\Cee_Y \to i_*\Cee_Z$,  where $\Cee_Y$ and $\Cee_Z$ are the constant sheaves on $Y$ and $Z$ respectively. Set
$$\Omega^\ccdot_{Y,Z} = \Ker\{ \Omega^\ccdot_Y \to i_*\Cee_Z[0] \} = \{I_Z \to  \Omega^1_Y\to  \Omega^2_Y \to \cdots\}.$$
There  are natural morphisms of complexes 
$$i_!\Cee_{Y-Z} \to \Omega^\ccdot_{Y,Z} \to R\pi_*\Omega^\ccdot_{\hY}(\log E)(-E) = \uOb_{Y,Z}$$ such that the composition $i_!\Cee_{Y-Z} \to    \uOb_{Y,Z}$ is a quasi-isomorphism. 

Suppose that the $\partial\bar\partial$-lemma holds for $\hY$. The hypercohomology spectral sequence with $E_1$ term
$$E_1^{p,q} = H^q(\hY; \Omega^p_{\hY}(\log E)(-E)) \implies \mathbb{H}^{p+q}
(\hY; \Omega^\ccdot_{\hY}(\log E)(-E)) \cong H^{p+q}(\hY, E; \Cee) \cong H^{p+q}(Y, Z; \Cee)$$
degenerates at $E_1$. More precisely, let $f\colon  \Omega^\ccdot_{\hY} \to  j_*\Omega^\ccdot_E/\tau^\ccdot_E$ be the natural surjection, where $j\colon E \to \hY$ is the inclusion, $\Omega^\ccdot_E$ is the complex of K\"ahler differentials on $E$ and $\tau^\ccdot_E$ is the subcomplex of differentials supported on the singular locus of $E$. There is an exact sequence
$$0 \to \Omega^\ccdot_{\hY}(\log E)(-E) \to \Omega^\ccdot_{\hY}\to j_*\Omega^\ccdot_E/\tau^\ccdot_E \to 0$$
which gives rise to a filtered quasi-isomorphism 
$$\Omega^\ccdot_{\hY}(\log E)(-E)[1] \simeq_{\rm{qis}} \operatorname{Cone}^\ccdot f.$$
Then the $E_1$ degeneration and the fact that  $\mathbb{H}^k
(\hY; \Omega^\ccdot_{\hY}(\log E)(-E)) \cong H^k(\hY, E; \Cee)$ are implied by \cite[Theorem 3.22, Proposition 5.46, and Example 7.25]{PS}.

 \subsection{The case of a local complete intersection} Our goal is to prove the following: 
 
\begin{theorem}\label{surjthm}  Suppose   that
\begin{enumerate}
\item[\rm(i)] The $\partial\bar\partial$-lemma holds for $\hY$.
\item[\rm(ii)] The singularities of $Y$ are isolated, lci and Du Bois.
\item[\rm(iii)] $H^{n-1}(Y; \scrO_Y) = 0$.
\end{enumerate}
Let $A$ be an Artin local $\Cee$-algebra  and let $\mathcal{Y} \to \Spec A$ be a   deformation of $Y$ over $\Spec A$. Then for every quotient $A' = A/I$, with $\mathcal{Y}' = \mathcal{Y}\times_{\Spec A}A'$, the natural homomorphism $H^{n-2}(\mathcal{Y}; \Omega^1_{\mathcal{Y}/\Spec A})\to H^{n-2}(\mathcal{Y}'; \Omega^1_{\mathcal{Y}'/\Spec A'})$ is surjective.
\end{theorem}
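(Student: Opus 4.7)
The strategy is to prove the surjectivity by establishing that $H^{n-2}(\mathcal{Y}; \Omega^1_{\mathcal{Y}/\Spec A})$ is a free $A$-module compatibly with base change; the surjectivity of the restriction map to $H^{n-2}(\mathcal{Y}'; \Omega^1_{\mathcal{Y}'/\Spec A'})$ is then immediate. First, by a standard d\'evissage (induction on the length of $A$) I would reduce to the case of a small extension $A' = A/I$ with $\mathfrak{m}_A \cdot I = 0$, so that $I$ is a finite-dimensional $\Cee$-vector space. Then the short exact sequence
\[
0 \to I \otimes_{\Cee} \Omega^1_Y \to \Omega^1_{\mathcal{Y}/\Spec A} \to \Omega^1_{\mathcal{Y}'/\Spec A'} \to 0
\]
reduces the claim to the vanishing of the connecting homomorphism
\[
\partial \colon H^{n-2}(\mathcal{Y}'; \Omega^1_{\mathcal{Y}'/\Spec A'}) \to I \otimes_\Cee H^{n-1}(Y; \Omega^1_Y).
\]

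To handle this vanishing, I would follow the two insights highlighted in the introduction. By Deligne's mixed Hodge theory, the pair $(\hY, E)$ carries a canonical mixed Hodge structure on $H^*(\hY, E; \Cee) \cong H^*_c(U; \Cee)$ (where $U = \hY \setminus E$), whose Hodge filtration is induced by the filtered complex $\Omega^\bullet_{\hY}(\log E)(-E)$. Hypothesis (i), the $\partial\bar\partial$-lemma on $\hY$, forces $E_1$-degeneration of the corresponding Hodge-to-de Rham spectral sequence. The lci and Du Bois hypotheses in (ii) then provide a natural comparison between $\Omega^1_Y$ and $R\pi_* \Omega^1_{\hY}(\log E)(-E)$ (equivalently, between $\Omega^1_Y$ and the degree-one piece $\underline{\Omega}^1_Y$ of the Du Bois complex) whose cone is supported on the finite set $Z$; its contribution to cohomology therefore lives only in low degrees and can be absorbed using hypothesis (iii), $H^{n-1}(Y; \scrO_Y) = 0$.

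The essential work is to extend this Hodge-theoretic analysis to the family $\mathcal{Y}/\Spec A$. Since $\mathcal{Y}$ is homeomorphic to $Y$, one has $H^k(\mathcal{Y}; \Cee) \cong H^k(Y; \Cee) \otimes_\Cee A$ as $A$-modules, free of the expected rank. Working with the abstract relative Du Bois complex $\underline{\Omega}^\bullet_{\mathcal{Y}/\Spec A}$ on $\mathcal{Y}$, together with the lci identification of $\Omega^1_{\mathcal{Y}/\Spec A}$ with its Du Bois analog in the relevant range, one obtains a filtered complex whose spectral sequence abuts to these $A$-free modules. Upper semicontinuity of fiber ranks of the $E_1$-terms, combined with a total dimension count against the abutment, forces $E_1$-degeneration over $A$ and $A$-freeness (with compatible base change) of each $E_1$-term, in particular of $H^{n-2}(\mathcal{Y}; \Omega^1_{\mathcal{Y}/\Spec A})$.

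The main obstacle is the transition to the family: since $\mathcal{Y}$ generally admits no simultaneous resolution, one cannot directly transport the $\partial\bar\partial$-lemma argument from $\hY$ to a resolution of $\mathcal{Y}$. Instead one must argue with the abstract relative Du Bois complex on $\mathcal{Y}$ and use the lci hypothesis to control the local discrepancy between $\Omega^1_{\mathcal{Y}/\Spec A}$ and its Du Bois analog near $Z$. Hypothesis (iii) enters precisely at the step where one needs a dimension count in the spectral sequence to force $E_1$-degeneration in the degree range of interest; without it, the term $H^{n-1}(Y; \scrO_Y)$ would prevent the required freeness.
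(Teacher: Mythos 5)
Your overall framing (the $\partial\bar\partial$-lemma giving $E_1$-degeneration for the pair $(\hY,E)$, the lci/Du Bois hypotheses controlling the discrepancy between $\Omega^1_Y$ and $R\pi_*\Omega^1_{\hY}(\log E)(-E)$ near $Z$, and the reduction to a statement about the special fiber) is on target, and the initial d\'evissage to small extensions is harmless. But the core mechanism you propose --- proving that $H^{n-2}(\mathcal{Y};\Omega^1_{\mathcal{Y}/\Spec A})$ is a \emph{free} $A$-module compatible with base change, by forcing $E_1$-degeneration of a relative Du Bois/Hodge--de Rham spectral sequence over $A$ via a total dimension count --- is exactly the approach the paper is designed to avoid, and it does not go through under the stated hypotheses. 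The dimension-count (Deligne semicontinuity) argument forces \emph{all} $E_1$-terms to be free simultaneously; on the central fiber this requires $\sum_{p+q=k}\dim H^q(Y;\underline\Omega^p_Y)=\dim H^k(Y;\Cee)$ with the graded pieces of the genuine Du Bois complex, and identifying the relevant term with $H^{n-2}(Y;\Omega^1_Y)$ and its relative analogue with $H^{n-2}(\mathcal{Y};\Omega^1_{\mathcal{Y}/\Spec A})$ amounts to the vanishing of all intermediate $R^i\pi_*\Omega^1_{\hY}(\log E)(-E)$, i.e.\ to the $1$-Du Bois condition of \cite{FL22c}. Under the mere ($0$-)Du Bois hypothesis the freeness you are after is not available (by cohomology and base change, freeness of $H^{n-2}$ would also require surjectivity of the base-change map in degree $n-3$, which nothing here provides), and there is in any case no relative Du Bois complex over $\Spec A$ to run the argument with, since $\mathcal{Y}$ admits no simultaneous resolution --- an obstacle you flag but do not overcome.

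The idea your proposal is missing is much softer and makes the whole freeness discussion unnecessary. Work with the two-term truncations $\mathcal{C}^\bullet_Y=\{\scrO_Y\to\Omega^1_Y\}$ and $\mathcal{C}^\bullet_{\mathcal{Y}}=\{\scrO_{\mathcal{Y}}\to\Omega^1_{\mathcal{Y}/\Spec A}\}$. On the special fiber, the $E_1$-degeneration for $\Omega^\bullet_{\hY}(\log E)(-E)$, Steenbrink's vanishing $R^{n-3}\pi_*\Omega^1_{\hY}(\log E)(-E)=0$, the reflexivity of $\Omega^1_Y$ (lci, isolated, $n\ge 3$), and $H^{n-1}(Y;\scrO_Y)=0$ combine to show that the map of constant sheaves $\Cee\to\mathcal{C}^\bullet_Y$ induces a surjection $H^{n-1}(Y;\Cee)\to\mathbb{H}^{n-1}(\mathcal{C}^\bullet_Y)\cong H^{n-2}(Y;\Omega^1_Y)$. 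The key point is then that this topological map factors through $\mathbb{H}^{n-1}(\mathcal{C}^\bullet_{\mathcal{Y}})$, because $\Cee\to\mathcal{C}^\bullet_{\mathcal{Y}}$ factors through the constant sheaf $A$; hence $\mathbb{H}^{n-1}(\mathcal{C}^\bullet_{\mathcal{Y}})\to\mathbb{H}^{n-1}(\mathcal{C}^\bullet_Y)$ is surjective, and since $H^{n-1}(\mathcal{Y};\scrO_{\mathcal{Y}})=0$ one gets a chain of surjections $H^{n-2}(\mathcal{Y};\Omega^1_{\mathcal{Y}/\Spec A})\to\mathbb{H}^{n-1}(\mathcal{C}^\bullet_{\mathcal{Y}})\to\mathbb{H}^{n-1}(\mathcal{C}^\bullet_Y)\cong H^{n-2}(Y;\Omega^1_Y)$. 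This transports the surjectivity to the family with no relative Hodge theory at all. Finally, surjectivity onto the central fiber upgrades to surjectivity onto every quotient $A'=A/I$ by Grothendieck's cohomology-and-base-change formalism, using flatness of $\Omega^1_{\mathcal{Y}/\Spec A}$ (from the lci hypothesis and the local criterion of flatness); this replaces your induction on small extensions.
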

\begin{proof} We begin with a series of lemmas:

\begin{lemma}\label{lemma0.11} The natural map $\Omega^1_Y  \to \pi_*\Omega^1_{\hY}(\log E)(-E)$ is an isomorphism.
\end{lemma}
\begin{proof} This follows from the fact that, since  the singularities of $Y$ are isolated lci of dimension at least $3$, the sheaf $\Omega^1_Y$ is reflexive. (See e.g.\ \cite[Proposition 9.7]{Kunz} as well as \cite[Remark 3.20]{FL22c}.)
\end{proof}

\begin{lemma}\label{lemma0.2} \begin{enumerate}
\item[\rm(i)]   $H^i(\hY; \scrO_{\hY}(-E)) \cong H^i(Y; \pi_* \scrO_{\hY}(-E)) \cong H^i(Y; I_Z)$, and the homomorphism  
$$H^i(\hY; \scrO_{\hY}(-E)) \cong H^i(Y; I_Z)\to H^i(Y; \scrO_Y) $$ is surjective for $i=1$ and an isomorphism for $i>1$.  In particular, $H^{n-1}(\hY; \scrO_{\hY}(-E)) =0$. 
\item[\rm(ii)]  The map $H^{n-2}(Y; \Omega^1_Y) \to H^{n-2}(\hY; \Omega^1_{\hY}(\log E)(-E))$ is injective.
\item[\rm(iii)]
  The differential $d\colon H^{n-2}(Y; \scrO_Y) \to H^{n-2}(Y; \Omega^1_Y)$ is zero.
  \end{enumerate}
\end{lemma}
\begin{proof} (i) This follows from the Leray spectral sequence, the Du Bois assumption,  and the fact that the map $H^i(Y; I_Z) \to H^i(Y; \scrO_Y)$ is surjective for $i=1$ and an isomorphism for $i > 1$.

\smallskip
\noindent (ii)  By a result of Steenbrink \cite[Theorem 5]{SteenbrinkDB}, if $n \ge 4$, the assumption of lci singularities implies that $R^{n-3}\pi_*\Omega^1_{\hY}(\log E)(-E) = 0$. From the Leray spectral sequence and Lemma~\ref{lemma0.11}, there is an exact sequence
$$H^0(Y; R^{n-3}\pi_*\Omega^1_{\hY}(\log E)(-E)) \to H^{n-2}(Y; \Omega^1_Y) \to H^{n-2}(\hY; \Omega^1_{\hY}(\log E)(-E)).$$
Thus the map $H^{n-2}(Y; \Omega^1_Y) \to H^{n-2}(\hY; \Omega^1_{\hY}(\log E)(-E))$ is injective. The same holds if $n=3$ because then the map $H^1(Y; \Omega^1_Y) = H^1(Y; \pi_*\Omega^1_{\hY}(\log E)(-E))\to H^1(\hY; \Omega^1_{\hY}(\log E)(-E))$ is automatically injective.

\smallskip
\noindent (iii)   There is a commutative diagram
$$\begin{CD}
H^{n-2}(\hY; \scrO_{\hY}(-E))@<{\cong}<<   H^i(Y; \pi_* \scrO_{\hY}(-E)) = H^{n-2}(Y;I_Z)  @>>>  H^{n-2}(Y; \scrO_Y) \\
@V{d}VV @V{d}VV @VV{d}V \\
H^{n-2}(\hY; \Omega^1_{\hY}(\log E)(-E)) @<<< H^{n-2}(Y; \pi_*\Omega^1_{\hY}(\log E)(-E) )@<{\cong}<<    H^{n-2}(Y; \Omega^1_Y).
\end{CD}$$
By (i), the top right horizontal map is an isomorphism if $n \ge 4$ and is surjective if $n=3$. The left hand vertical map is $0$. By (ii), the map $H^{n-2}(Y; \pi_*\Omega^1_{\hY}(\log E)(-E) ) \to H^{n-2}(Y; \Omega^1_{\hY}(\log E)(-E))$ is injective. Thus $d\colon H^{n-2}(Y; \scrO_Y) \to H^{n-2}(Y; \Omega^1_Y)$ is zero.
\end{proof} 

\begin{lemma}\label{lemma0.3}  Let $\mathcal{C}^\ccdot_Y$ be the complex $\{\scrO_Y \xrightarrow{d} \Omega^1_Y\}$ and let $\mathcal{C}^\ccdot_{Y,Z}$ be the complex $\{I_Z  \xrightarrow{d} \Omega^1_Y\}$. Then
\begin{enumerate}
\item[\rm(i)]   $\mathbb{H}^{n-1}(Y;\mathcal{C}^\ccdot_{Y,Z} ) \cong \mathbb{H}^{n-1}(Y; \mathcal{C}^\ccdot_Y)\cong H^{n-2}(Y; \Omega^1_Y)$. 
\item[\rm(ii)]    The homomorphism
$i_!\Cee_{Y-Z}  \to \mathcal{C}^\ccdot_{Y,Z}$ induces a surjection
$$H^{n-1}(Y,Z; \Cee) \to \mathbb{H}^{n-1}(Y;\mathcal{C}^\ccdot_{Y,Z}) \cong H^{n-2}(Y; \Omega^1_Y).$$
\item[\rm(iii)]  The homomorphism
$\Cee\to \mathcal{C}^\ccdot_Y$ induces a surjection
$$H^{n-1}(Y; \Cee) \to \mathbb{H}^{n-1}(Y;\mathcal{C}^\ccdot_Y) \cong H^{n-2}(Y; \Omega^1_Y).$$
\end{enumerate}
\end{lemma}
\begin{proof} (i)  The isomorphism  $\mathbb{H}^{n-1}(Y;\mathcal{C}^\ccdot_{Y,Z} ) \cong \mathbb{H}^{n-1}(Y;\mathcal{C}^\ccdot_Y)$ follows from the exact sequence
$$0 \to \mathcal{C}^\ccdot_{Y,Z} \to \mathcal{C}^\ccdot_Y \to i_*\Cee_Z[0] \to 0.$$
The   hypercohomology spectral sequence gives an exact sequence
$$H^{n-2}(Y; \scrO_Y) \xrightarrow{d}  H^{n-2}(Y; \Omega^1_Y) \to \mathbb{H}^{n-1}(Y;\mathcal{C}^\ccdot_Y) \to H^{n-1}(Y; \scrO_Y) .$$
By assumption, $H^{n-1}(Y; \scrO_Y) =0$. By Lemma~\ref{lemma0.2}(iii), $d\colon H^{n-2}(Y; \scrO_Y) \to H^{n-2}(Y; \Omega^1_Y)$ is zero. Thus $\mathbb{H}^{n-1}(Y;\mathcal{C}^\ccdot_Y) \cong H^{n-2}(Y; \Omega^1_Y)$.

\smallskip
\noindent (ii) The homomorphism $i_!\Cee_{Y-Z}  \to \mathcal{C}^\ccdot_{Y,Z} $   defines a homomorphism 
$$H^{n-1}(Y,Z; \Cee) \to \mathbb{H}^{n-1}(Y;\mathcal{C}^\ccdot_{Y,Z} )=H^{n-2}(Y; \Omega^1_Y).$$
To see that this homomorphism  is surjective, let $\mathcal{D}^\ccdot$ be the complex  $\{\scrO_{\hY}(-E)  \to \Omega^1_{\hY}(\log E)(-E)\}$. 
By  the $E_1$ degeneration described in \S\ref{ssect2.1}, the homomorphism
$$H^i(Y,Z; \Cee) = \mathbb{H}^i(\hY; \Omega^p_{\hY}(\log E)(-E)) \to \mathbb{H}^i(\hY; \Omega^p_{\hY}(\log E)(-E)/\sigma^{\ge 2}) = \mathbb{H}^i(\hY; \mathcal{D}^\ccdot)$$
is surjective. Moreover, the hypercohomology spectral sequence for $\mathbb{H}^i(\hY; \mathcal{D}^\ccdot)$ degenerates at $E_1$, so that in particular there is an exact sequence
$$0 \to H^{n-2}(\hY; \Omega^1_{\hY}(\log E)(-E)) \to \mathbb{H}^{n-1}(\hY;\mathcal{D}^\ccdot) \to H^{n-1}(\hY; \scrO_{\hY}(-E)) \to 0.$$
Hence, by Lemma~\ref{lemma0.2}(i), $\mathbb{H}^{n-1}(\hY;\mathcal{D}^\ccdot) \cong H^{n-2}(\hY; \Omega^1_{\hY}(\log E)(-E))$. There is a morphism of complexes $ \mathcal{C}^\ccdot_{Y,Z} \to R\pi_*\mathcal{D}^\ccdot $ and hence a commutative diagram
$$\begin{CD}
H^{n-1}(Y,Z;\Cee) @>>> \mathbb{H}^{n-1}(Y;\mathcal{C}^\ccdot_{Y,Z} ) \cong H^{n-2}(Y; \Omega^1_Y)\\
@| @VVV\\
H^{n-1}(Y,Z;\Cee) @>>> \mathbb{H}^{n-1}(\hY;\mathcal{D}^\ccdot) \cong H^{n-2}(\hY; \Omega^1_{\hY}(\log E)(-E)).
\end{CD}$$
The lower horizontal arrow is surjective. Also, by Lemma~\ref{lemma0.2}(ii), the right hand vertical map is injective. Thus $H^{n-1}(Y,Z; \Cee) \to  H^{n-2}(Y; \Omega^1_Y)$ is surjective.

\smallskip
\noindent (iii) There is a commutative diagram
$$\begin{CD} 
H^{n-1}(Y,Z;\Cee) @>>>  \mathbb{H}^{n-1}(Y;\mathcal{C}^\ccdot_{Y,Z} ) \cong H^{n-2}(Y; \Omega^1_Y)\\
@V{\cong}VV @|\\
H^{n-1}(Y; \Cee) @>>> \mathbb{H}^{n-1}(Y; \mathcal{C}^\ccdot_Y) \cong H^{n-2}(Y; \Omega^1_Y).
\end{CD}$$
Thus the result follows from (ii).
\end{proof} 

\begin{remark} The proof of (ii) above shows that the map 
$$H^{n-2}(Y; \Omega^1_Y) \cong H^{n-2}(Y; \pi_*\Omega^1_{\hY}(\log E)(-E)) \to H^{n-2}(\hY; \Omega^1_{\hY}(\log E)(-E))$$ is an isomorphism. See \cite[Proposition 5.2]{PSV} for a generalization of the argument. 
\end{remark}

\begin{lemma}\label{lemma0.4} Let $A$ be an Artin local $\Cee$-algebra and let $\mathcal{Y} \to \Spec A$ be a deformation of $Y$ over $\Spec A$. Then the natural map $H^{n-2}(\mathcal{Y}; \Omega^1_{\mathcal{Y}/\Spec A}) \to H^{n-2}(Y; \Omega^1_Y)$ is surjective. 
\end{lemma}
\begin{proof} Let $\mathcal{C}^\ccdot_{\mathcal{Y}}$ be the complex
$\{\scrO_{\mathcal{Y}}  \xrightarrow{d} \Omega^1_{\mathcal{Y}/\Spec A}\}$. Since $H^{n-1}(Y; \scrO_Y) =0$, a standard argument shows that $H^{n-1}(\mathcal{Y};\scrO_{\mathcal{Y}}) =0$ as well. Arguing as in the proof of Lemma~\ref{lemma0.3}(i),  there is a surjection 
$$H^{n-2}(\mathcal{Y};  \Omega^1_{\mathcal{Y}/\Spec A}) \to \mathbb{H}^{n-1}(\mathcal{Y}; \mathcal{C}^\ccdot_{\mathcal{Y}}) .$$
From the morphisms of complexes $\Cee[0] \to A[0] \to \mathcal{C}^\ccdot_{\mathcal{Y}}$, there is a commutative diagram
$$\begin{CD}
H^{n-1}(Y;\Cee) @>>> \mathbb{H}^{n-1}(\mathcal{Y}; \mathcal{C}^\ccdot_{\mathcal{Y}}) \\
@V{=}VV @VVV \\
H^{n-1}(Y;\Cee) @>>> \mathbb{H}^{n-1}(Y;\mathcal{C}^\ccdot_Y) \cong H^{n-2}(Y; \Omega^1_Y).
\end{CD}$$
By Lemma~\ref{lemma0.3}(iii), the map  $H^{n-1}(Y;\Cee) \to \mathbb{H}^{n-1}(Y;\mathcal{C}^\ccdot_Y)$ is surjective. Since it factors through the map $\mathbb{H}^{n-1}(\mathcal{Y}; \mathcal{C}^\ccdot_{\mathcal{Y}}) \to \mathbb{H}^{n-1}(Y;\mathcal{C}^\ccdot_Y)$, this last homomorphism is surjective as well. Then there is a sequence of surjections
$$H^{n-2}(\mathcal{Y};  \Omega^1_{\mathcal{Y}/\Spec A}) \to \mathbb{H}^{n-1}(\mathcal{Y}; \mathcal{C}^\ccdot_{\mathcal{Y}}) \to \mathbb{H}^{n-1}(Y;\mathcal{C}^\ccdot_Y) \cong H^{n-2}(Y; \Omega^1_Y).$$
Thus $H^{n-2}(\mathcal{Y}; \Omega^1_{\mathcal{Y}/\Spec A}) \to H^{n-2}(Y; \Omega^1_Y)$ is surjective. 
\end{proof} 

Finally, the fact that Lemma~\ref{lemma0.4} implies Theorem~\ref{surjthm} is a formal argument \cite[III 12.10 and 12.5]{Hartshorne} (cf.\ also the first paragraph of the proof of \cite[Theorem 4.1]{FL22c}), namely that for a surjection $A\to A'$, the natural map
$$H^{n-2}(\mathcal{Y}; \Omega^1_{\mathcal{Y}/\Spec A})\otimes _AA' \to H^{n-2}(\mathcal{Y}; \Omega^1_{\mathcal{Y}/\Spec A}\otimes_AA') \cong H^{n-2}(\mathcal{Y}'; \Omega^1_{\mathcal{Y}'/\Spec A'})$$
is surjective. (Here, we use the lci assumption to show the flatness of $ \Omega^1_{\mathcal{Y}/\Spec A}$ as an $A$-module, which is an easy consequence of the local criterion of flatness (cf.\ \cite[proof of Theorem 2.5]{FL22c}), but it is enough as noted in the proof of  \cite[Theorem 2.2]{gross_defcy}  that it is flat away from a zero-dimensional subspace.)
\end{proof} 

\subsection{The non lci case}  If we want to extend these results to the non lci case, then, following  Gross \cite[Lemma 2.4]{gross_defcy} and \cite[\S8 and Corollary 13.1]{Imagi}, we must consider the cokernel of the map $H_Z^{n-2}(Y;  \Omega^1_Y) \to H^{n-2}(Y;  \Omega^1_Y)$. (In the lci case, $H^{n-2}_Z(Y; \Omega^1_Y)$ is easily seen to be $0$.)    The main point is to show the following generalization of \cite[(13.2)]{Imagi}:
 
\begin{theorem}\label{surjthm2} Suppose  that
\begin{enumerate}
\item[\rm(i)] The $\partial\bar\partial$-lemma holds for $\hY$.
\item[\rm(ii)] The singularities of $Y$ are isolated and  Du Bois.
\item[\rm(iii)]  $H^{n-1}(Y; \scrO_Y) = 0$.
\end{enumerate}
Let $A$ be an Artin local $\Cee$-algebra  and let $\mathcal{Y} \to \Spec A$ be a   deformation of $Y$ over $\Spec A$. Then  the natural homomorphism $H^{n-2}(\mathcal{Y}; \Omega^1_{\mathcal{Y}/\Spec A})\to H^{n-2}(Y; \Omega^1_Y)/ \im H_Z^{n-2}(Y;  \Omega^1_Y)$   is surjective.
\end{theorem}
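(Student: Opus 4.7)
The plan is to adapt the proof of Theorem~\ref{surjthm} to the non-lci setting, replacing $H^{n-2}(Y;\Omega^1_Y)$ by the quotient $H^{n-2}(Y;\Omega^1_Y)/\im H^{n-2}_Z(Y;\Omega^1_Y)$ at each step where the lci hypothesis was invoked. This quotient captures exactly the data that survives restriction to the smooth locus $U=Y\setminus Z$: by the local cohomology long exact sequence it is realised as a subgroup of $H^{n-2}(U;\Omega^1_U)$. I aim to re-establish, in this quotiented form, analogues of Lemmas~\ref{lemma0.2}(iii), \ref{lemma0.3}(iii), and \ref{lemma0.4}; the argument of Lemma~\ref{lemma0.4} then concludes the proof verbatim.

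First, I will show that $d\colon H^{n-2}(Y;\scrO_Y)\to H^{n-2}(Y;\Omega^1_Y)$ has image contained in $\im H^{n-2}_Z(Y;\Omega^1_Y)$, equivalently that the composition $H^{n-2}(Y;\scrO_Y)\xrightarrow{d} H^{n-2}(Y;\Omega^1_Y)\to H^{n-2}(U;\Omega^1_U)$ vanishes. By naturality of $d$ with respect to restriction and the Du Bois isomorphism $H^{n-2}(\hY;\scrO_{\hY}(-E))\cong H^{n-2}(Y;\scrO_Y)$ of Lemma~\ref{lemma0.2}(i) (which requires only the Du Bois hypothesis), this composition identifies with the analogous composition on $\hY$, namely $H^{n-2}(\hY;\scrO_{\hY}(-E))\xrightarrow{d}H^{n-2}(\hY;\Omega^1_{\hY}(\log E)(-E))\to H^{n-2}(U;\Omega^1_U)$, whose first arrow vanishes by the $E_1$-degeneration afforded by the $\partial\bar\partial$-lemma, exactly as in Lemma~\ref{lemma0.2}(iii). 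Next, for the non-lci analogue of Lemma~\ref{lemma0.3}(iii), the same $E_1$-degeneration together with $H^{n-1}(\hY;\scrO_{\hY}(-E))=0$ (Lemma~\ref{lemma0.2}(i)) yields the surjection $H^{n-1}(Y,Z;\Cee)\twoheadrightarrow H^{n-2}(\hY;\Omega^1_{\hY}(\log E)(-E))$; after restriction to $U$, the image in $H^{n-2}(U;\Omega^1_U)$ contains the image of $H^{n-2}(Y;\Omega^1_Y)$, since the natural map $\Omega^1_Y\to R\pi_*\Omega^1_{\hY}(\log E)(-E)$ produces a factoring $H^{n-2}(Y;\Omega^1_Y)\to H^{n-2}(\hY;\Omega^1_{\hY}(\log E)(-E))\to H^{n-2}(U;\Omega^1_U)$ of the restriction map. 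Combining this with the injection $H^{n-2}(Y;\Omega^1_Y)/\im H^{n-2}_Z\hookrightarrow H^{n-2}(U;\Omega^1_U)$ from local cohomology and the isomorphism $H^{n-1}(Y,Z;\Cee)\cong H^{n-1}(Y;\Cee)$ (valid for $n\ge 3$ since $H^i(Z;\Cee)=0$ for $i\ge 1$) delivers the surjection $H^{n-1}(Y;\Cee)\twoheadrightarrow H^{n-2}(Y;\Omega^1_Y)/\im H^{n-2}_Z(Y;\Omega^1_Y)$.

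The final step runs the argument of Lemma~\ref{lemma0.4} with the complex $\mathcal{C}^\bullet_{\mathcal{Y}}=\{\scrO_{\mathcal{Y}}\to\Omega^1_{\mathcal{Y}/\Spec A}\}$. The vanishing $H^{n-1}(\mathcal{Y};\scrO_{\mathcal{Y}})=0$ yields $H^{n-2}(\mathcal{Y};\Omega^1_{\mathcal{Y}/\Spec A})\twoheadrightarrow\mathbb{H}^{n-1}(\mathcal{C}^\bullet_{\mathcal{Y}})$, while the morphism $\Cee\to A\to\mathcal{C}^\bullet_{\mathcal{Y}}$ factors the above surjection from $H^{n-1}(Y;\Cee)$ through $\mathbb{H}^{n-1}(\mathcal{C}^\bullet_{\mathcal{Y}})\to\mathbb{H}^{n-1}(\mathcal{C}^\bullet_Y)\cong H^{n-2}(Y;\Omega^1_Y)/\im d\twoheadrightarrow H^{n-2}(Y;\Omega^1_Y)/\im H^{n-2}_Z$, the final quotient being legitimate by the first step. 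The hard part is that first step: in the lci case, Lemma~\ref{lemma0.2}(ii) used Steenbrink's vanishing $R^{n-3}\pi_*\Omega^1_{\hY}(\log E)(-E)=0$ to obtain $d=0$ outright via injectivity of $H^{n-2}(Y;\Omega^1_Y)\hookrightarrow H^{n-2}(\hY;\Omega^1_{\hY}(\log E)(-E))$, a property that may fail in general; the remedy is to restrict to the smooth locus $U$, where all torsion and reflexivity defects of $\Omega^1_Y$ (concentrated on the finite set $Z$) become invisible, so that the potential obstruction is identified precisely with $\im H^{n-2}_Z(Y;\Omega^1_Y)$.
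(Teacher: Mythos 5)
Your proof is correct, and it reaches the theorem by the same overall strategy as the paper: run the argument of Theorem~\ref{surjthm} on $\hY$, where the $E_1$ degeneration for $\Omega^\bullet_{\hY}(\log E)(-E)$ and the Du Bois vanishing are available, and then transfer back to $Y$ modulo a correction supported on $Z$. But you implement the correction step differently. The paper introduces the subspace $R\subseteq H^{n-2}(Y;\Omega^1_Y)$ coming from the Leray differential on $H^0(Y;R^{n-3}\pi_*\Omega^1_{\hY}(\log E)(-E))$, proves the surjectivity of $H^{n-1}(Y;\Cee)\to H^{n-2}(Y;\Omega^1_Y)/R$ (Lemma~\ref{lemma4.3}), and separately shows $R\subseteq \im H^{n-2}_Z(Y;\Omega^1_Y)$ by comparing the Leray spectral sequences in local and ordinary cohomology (Lemma~\ref{lemma4.2}). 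You bypass $R$ altogether: since the restriction $H^{n-2}(Y;\Omega^1_Y)\to H^{n-2}(U;\Omega^1_U)$ (with $U=Y\setminus Z$) factors through $H^{n-2}(\hY;\Omega^1_{\hY}(\log E)(-E))$ and has kernel exactly $\im H^{n-2}_Z(Y;\Omega^1_Y)$ by the local cohomology sequence, everything killed by the map to $\hY$ --- in particular $\im d$, by the commutative square in the proof of Lemma~\ref{lemma0.2}(iii) --- automatically lies in $\im H^{n-2}_Z$, and the surjection $H^{n-1}(Y,Z;\Cee)\to H^{n-2}(\hY;\Omega^1_{\hY}(\log E)(-E))$ descends, after restriction to $U$, to the required surjection onto $H^{n-2}(Y;\Omega^1_Y)/\im H^{n-2}_Z(Y;\Omega^1_Y)$. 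Your route avoids any analysis of $R^{n-3}\pi_*$ and the local/ordinary Leray compatibility, at the cost of the slightly sharper intermediate statement (surjectivity onto the a priori larger quotient $H^{n-2}(Y;\Omega^1_Y)/R$), which the theorem does not need. One point to make explicit in a careful write-up of the second step: the image of $H^{n-1}(Y,Z;\Cee)$ in $H^{n-2}(U;\Omega^1_U)$ is not only shown to \emph{contain} the image of $H^{n-2}(Y;\Omega^1_Y)$ (via the surjection onto $H^{n-2}(\hY;\Omega^1_{\hY}(\log E)(-E))$ and the factorization of restriction), it is also automatically \emph{contained} in it, because the map factors through $\mathbb{H}^{n-1}(\mathcal{C}^\bullet_{Y,Z})\cong H^{n-2}(Y;\Omega^1_Y)/\im d$; the two containments together give exactly the surjectivity onto $H^{n-2}(Y;\Omega^1_Y)/\im H^{n-2}_Z(Y;\Omega^1_Y)$ that you need before feeding the result into the argument of Lemma~\ref{lemma0.4}.
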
 
 \begin{proof}
First assume that $n\ge 4$.  The lci assumption is used in two places in the proof of Theorem~\ref{surjthm}: Firstly, it implies  that $\Omega^1_Y$ is reflexive and hence  that $\pi_*\Omega^1_{\hY}(\log E)(-E) =\Omega^1_Y$.   In the general case, we have the following result for $n\ge 4$:
 
 \begin{lemma}\label{lemma4.1} For $n\ge 4$, the natural map $ H^{n-2}(Y;  \Omega^1_Y)  \to  H^{n-2}(Y; \pi_*\Omega^1_{\hY}(\log E)(-E) )$ is an isomorphism.
 \end{lemma} 
 \begin{proof} This follows immediately from the fact that the kernel and cokernel of the morphism $ \Omega^1_Y \to \pi_*\Omega^1_{\hY}(\log E)(-E)$ are supported on a finite set.
 \end{proof}

 In the second place,  for $n \ge 4$, we used the lci assumption to show that $R^{n-3}\pi_*\Omega^1_{\hY}(\log E)(-E) = 0$. In general, we have the following:
 
 \begin{lemma}\label{lemma4.2}  For $n\ge 4$, let $R$ be the image of $H^0(Y; R^{n-3}\pi_*\Omega^1_{\hY}(\log E)(-E))$ in $H^{n-2}(Y;  \Omega^1_Y)$ via the composition of the map $H^0(Y; R^{n-3}\pi_*\Omega^1_{\hY}(\log E)(-E)) \to  H^{n-2}(Y; \pi_*\Omega^1_{\hY}(\log E)(-E) )$ and the inverse of the isomorphism $ H^{n-2}(Y;  \Omega^1_Y)  \to  H^{n-2}(Y; \pi_*\Omega^1_{\hY}(\log E)(-E) )$ of Lemma~\ref{lemma4.1}. Then $R$ is contained in  the image of the map $H_Z^{n-2}(Y;  \Omega^1_Y) \to H^{n-2}(Y;  \Omega^1_Y)$. 
 \end{lemma}
  \begin{proof}    Let $\mathcal{F}$ be an arbitrary  sheaf on $\hY$.
 The Leray spectral sequences in local and ordinary cohomology are compatible, so there is a commutative diagram
 $$\begin{CD}
 H^0_Z(Y; R^{i-1}\pi_*\mathcal{F}) @>>> H^i_Z(Y; R^0\pi_*\mathcal{F})\\
 @VVV @VVV\\
  H^0 (Y;R^{i-1}\pi_*\mathcal{F}) @>>> H^i(Y; R^0\pi_*\mathcal{F}).
  \end{CD}$$
  For $i \ge 2$, $R^{i-1}\pi_*\mathcal{F}$ is supported on $Z$ and hence $H^0_Z(Y; R^{i-1}\pi_*\mathcal{F}) \cong H^0(Y; R^{i-1}\pi_*\mathcal{F})$. Thus, for $i \ge 2$,  the image of $H^0 (Y;R^{i-1}\pi_*\mathcal{F})$ in $ H^i(Y; R^0\pi_*\mathcal{F})$ is contained in the image of $H^i_Z(Y; R^0\pi_*\mathcal{F})$. Applying this to $\mathcal{F} = \Omega^1_{\hY}(\log E)(-E)$, since $n \ge 4$, the image    of the map 
  $$H^0(Y; R^{n-3}\pi_*\Omega^1_{\hY}(\log E)(-E))\to H^{n-2}(Y; \pi_*\Omega^1_{\hY}(\log E)(-E) )$$ 
  is contained in the image of $H^{n-2}_Z(Y; \pi_*\Omega^1_{\hY}(\log E)(-E) )$. 
Since the kernel and cokernel of the morphism $ \Omega^1_Y \to \pi_*\Omega^1_{\hY}(\log E)(-E)$ are supported on a finite set and $n \ge 4$, $H_Z^{n-2}(Y;  \Omega^1_Y)\cong  H^{n-2}_Z(Y; \pi_*\Omega^1_{\hY}(\log E)(-E))$. The commutative diagram   
$$\begin{CD}
  H_Z^{n-2}(Y;  \Omega^1_Y) @>>>  H^{n-2}(Y;  \Omega^1_Y)\\
  @V{\cong}VV @VV{\cong}V \\
  H^{n-2}_Z(Y; \pi_*\Omega^1_{\hY}(\log E)(-E))  @>>>  H^{n-2}(Y; \pi_*\Omega^1_{\hY}(\log E)(-E) ) 
  \end{CD}$$
  then implies that $R$ is contained in  the image of the map $H_Z^{n-2}(Y;  \Omega^1_Y) \to H^{n-2}(Y;  \Omega^1_Y)$. 
 \end{proof}
 
 Next, the Hodge theory methods of this section  show the following:
 
  \begin{lemma}\label{lemma4.3}   For $n\ge 4$, let $R$ be as in Lemma~\ref{lemma4.2} and let $\mathcal{C}^\ccdot_Y$ be the complex $\{\scrO_Y  \xrightarrow{d} \Omega^1_Y\}$.     Then 
  \begin{enumerate}
  \item[\rm(i)] The composed map 
 \begin{gather*}
 H^{n-2}(Y; \Omega^1_Y)/R \cong H^{n-2}(Y; \pi_*\Omega^1_{\hY}(\log E)(-E) )/\im  H^0(Y; R^{n-3}\pi_*\Omega^1_{\hY}(\log E)(-E))\\
 \to H^{n-2}(\hY; \Omega^1_{\hY}(\log E)(-E))
 \end{gather*}
  is injective.
 \item[\rm(ii)]  The image of the differential $d\colon H^{n-2}(Y; \scrO_Y) \to H^{n-2}(Y; \Omega^1_Y)$ is contained in $R$, and hence the induced map $H^{n-2}(Y; \scrO_Y) \to H^{n-2}(Y; \Omega^1_Y)/R$ is zero.
\item[\rm(iii)] The inclusion $\Omega^1_Y[-1] \to \mathcal{C}^\ccdot_Y$ induces a surjection $\mathbb{H}^{n-1}(Y;\mathcal{C}^\ccdot_Y) \to H^{n-2}(Y; \Omega^1_Y)/R$.
\item[\rm(iv)] The homomorphism
$\Cee \to \mathcal{C}^\ccdot_Y$ induces a surjection $H^{n-1}(Y; \Cee) \to \mathbb{H}^{n-1}(Y;\mathcal{C}^\ccdot_Y)$ and hence the composition
$$H^{n-1}(Y; \Cee) \to \mathbb{H}^{n-1}(Y;\mathcal{C}^\ccdot_Y) \to H^{n-2}(Y; \Omega^1_Y)/R$$
is surjective.
\end{enumerate}
 \end{lemma}
  \begin{proof}  The proofs of these statements are minor variations of the proofs of Lemmas~\ref{lemma0.2}  and \ref{lemma0.3}.
 \end{proof}

 To complete the proof of Theorem~\ref{surjthm2}, define   the complex
$\mathcal{C}^\ccdot_{\mathcal{Y}}= \{\scrO_{\mathcal{Y}}  \xrightarrow{d} \Omega^1_{\mathcal{Y}/\Spec A}\}$ as in Lemma~\ref{lemma0.4}. The proof there  shows that the map
$H^{n-2}(\mathcal{Y}; \Omega^1_{\mathcal{Y}/\Spec A})\to H^{n-2}(Y; \Omega^1_Y)/R$   is surjective. By Lemma~\ref{lemma4.2}, $R\subseteq \im H_Z^{n-2}(Y;  \Omega^1_Y) $. Thus there is a well-defined surjection 
$$H^{n-2}(Y; \Omega^1_Y)/R \to H^{n-2}(Y; \Omega^1_Y)/\im H_Z^{n-2}(Y;  \Omega^1_Y) .$$
Then the composition $H^{n-2}(\mathcal{Y}; \Omega^1_{\mathcal{Y}/\Spec A})\to H^{n-2}(Y; \Omega^1_Y)/R \to H^{n-2}(Y; \Omega^1_Y)/ \im H_Z^{n-2}(Y;  \Omega^1_Y)$   is surjective as well.

 The above proof works with minor modifications in case $n=3$.  In this case, by the Leray spectral sequence, the natural map
$$H^1(Y; \pi_*\Omega^1_{\hY}(\log E)(-E)) \to H^1(\hY; \Omega^1_{\hY}(\log E)(-E))$$ 
is injective. There are two  short exact sequences
\begin{align*}
0 \to F \to &\Omega^1_Y \to \mathcal{S} \to 0;\\
0\to \mathcal{S}  \to   \pi_*\Omega^1_{\hY}(\log &E)(-E) \to F' \to 0,\\
\end{align*}
where $F$ and $F'$ are  supported on $Z$. Thus $H^1(Y; \Omega^1_Y) \cong H^1(Y;\mathcal{S})$ and $H^1_Z(Y; \Omega^1_Y) \cong H^1_Z(Y;\mathcal{S})$, there is an exact sequence
$$H^0(Y;F') \to H^1(Y;\mathcal{S}) \to  H^1(Y; \pi_*\Omega^1_{\hY}(\log E)(-E)) \to 0,$$
and the following diagram commutes:
$$\begin{CD}
H^0_Z(Y; F') @>>> H^1_Z(Y;\mathcal{S}) \cong H^1_Z(Y; \Omega^1_Y)\\
@V{\cong}VV @VVV   \\
H^0 (Y; F') @>>> H^1(Y;\mathcal{S}) \cong H^1(Y; \Omega^1_Y).
\end{CD}$$
In particular, identifying $ H^1(Y;\mathcal{S})$ with $H^1(Y; \Omega^1_Y)$,  let $R$ be  the image of $H^0 (Y; F')$ in $H^1(Y; \Omega^1_Y)$. Then there is an induced injection $H^1(Y; \Omega^1_Y)/R \to  H^1(\hY; \Omega^1_{\hY}(\log E)(-E))$, and $R$  is contained in the image of $H^1_Z(Y; \Omega^1_Y)$. The rest of the argument follows the lines of the proof in case $n \ge 4$.
\end{proof}

\subsection{The case of a pair}\label{subsection2.4}  In this section, we generalize the previous results to the case of a pair $(Y,D)$. Here we keep the previous conventions on  $Y$  and let  $D=\bigcup_iD_i\subseteq Y$ be  a divisor with simple normal crossings contained in the smooth locus of $Y$. Thus we can identify $D$ with its preimage in $\hY$. We then have a spectral sequence with $E_1$ term 
\begin{align*}
E_1^{p,q} &= H^q(\hY; \Omega^p_{\hY}(\log(E+D)(-E-D)) \implies \mathbb{H}^{p+q}(\hY;  \Omega^\ccdot_{\hY}(\log(E+D)(-E-D)) \\
&\cong H^{p+q}(\hY, E\cup D;\Cee) \cong H^{p+q}(Y,Z\cup D ; \Cee),
\end{align*}
which degenerates at the $E_1$ page if the $\partial\bar\partial$-lemma holds for $\hY$ and all of the $k$-fold intersections $D_{i_1}\cap\cdots \cap D_{i_k}$. Finally,  a \textsl{log deformation of the  pair} $(Y,D)$ over an Artin local $\Cee$-algebra $A$  is a pair $(\mathcal{Y}, \mathcal{D})$, where $\mathcal{D} = \bigcup_i\mathcal{D}_i$ is a  Cartier divisor in $\mathcal{Y}$,  together with a flat proper morphism $\mathcal{Y} \to \Spec A$ and an isomorphism $\mathcal{Y}\times_{\Spec A} \Spec \Cee \to Y$ such that each $\mathcal{D}_i$ is smooth over $\Spec A$ and the morphism $\mathcal{Y}\times_{\Spec A} \Spec \Cee \to Y$ identifies $\mathcal{D}_i\times_{\Spec A} \Spec \Cee$ with $D_i$. The complex $\Omega^\ccdot_{\mathcal{Y}/\Spec A}(\log\mathcal{D})(-\mathcal{D})$ is defined in the usual way.

Minor modifications of the arguments above show:

\begin{theorem}\label{pairsurjthm} Suppose that
\begin{enumerate}
\item[\rm(i)] The $\partial\bar\partial$-lemma holds for $\hY$ and all of the $k$-fold intersections $D_{i_1}\cap\cdots \cap D_{i_k}$.
\item[\rm(ii)] The singularities of $Y$ are isolated, lci and Du Bois.
\item[\rm(iii)] $H^{n-1}(Y; \scrO_Y(-D)) = 0$.
\end{enumerate}
Let $A$ be an Artin local $\Cee$-algebra  and let $(\mathcal{Y},\mathcal{D})\to \Spec A$ be a  log deformation of the  pair $(Y,D)$ over $\Spec A$. Then  the natural homomorphism 
$$H^{n-2}(\mathcal{Y}; \Omega^1_{\mathcal{Y}/\Spec A}(\log\mathcal{D})(-\mathcal{D}))\to H^{n-2}(Y; \Omega^1_Y(\log D)(-D))$$ is surjective.

Without  the assumption in (ii) that the singularities are local complete intersections, then, with $A$ and $(\mathcal{Y},\mathcal{D})\to \Spec A$ as above, 
the natural homomorphism 
$$H^{n-2}(\mathcal{Y}; \Omega^1_{\mathcal{Y}/\Spec A}(\log\mathcal{D})(-\mathcal{D}))\to H^{n-2}(Y; \Omega^1_Y(\log D)(-D))/H^{n-2}_Z(Y; \Omega^1_Y(\log D)(-D))$$ is surjective.
\end{theorem} 
\begin{proof} The proofs of Theorems~\ref{surjthm} and ~\ref{surjthm2} can be easily modified to deal with this case: replace $\scrO_Y$ and $I_Z$ by $\scrO_Y(-D)$ and $I_Z(-D)$, $\Omega^1_Y$ by $\Omega^1_Y(\log D)(-D)$, $\scrO_{\hY}(-E)$ by $\scrO_{\hY}(-E-D)$ and $\Omega^1_{\hY}(\log E)(-E)$ by $\Omega^1_{\hY}(\log (E+D))(-E-D)$ throughout.
The sheaves $i_!\Cee_{Y-Z}$ and $\Cee$ (the constant sheaf on $Y$)    are then replaced by $j_!\Cee_{Y-Z-D}$ and $j'_!\Cee_{Y-D}$ respectively, where $j\colon Y-Z-D \to Y$ and $j'\colon Y-D \to Y$ are the inclusions. Finally, the complex $\mathcal{C}^\ccdot_{\mathcal{Y}}$ is replaced by the complex
$$\{\scrO_{\mathcal{Y}}(-\mathcal{D})  \xrightarrow{d} \Omega^1_{\mathcal{Y}/\Spec A}(\log\mathcal{D})(-\mathcal{D})\}.$$
The proofs are then essentially unchanged.
\end{proof}

\section{Results from deformation theory} \label{section3} 

The goal of this mostly expository section is to prove Theorem~\ref{mainthm} along the lines of Namikawa \cite[Proof of Theorem 1]{namtop}, as well as   corresponding generalizations to the non lci case (Theorem~\ref{genT1thm})   along the lines of  Gross \cite[Theorem 2.2]{gross_defcy} and to pairs (Theorem~\ref{pairmainthm}). \textbf{Throughout this section, unless otherwise specified, we assume that $Y$ is a  singular Calabi-Yau variety of dimension $n\ge 3$ with isolated singularities and  that there exists a resolution $\pi\colon \hY \to Y$ satisfying the $\partial\bar\partial$-lemma.}   Before we begin, we note the following two lemmas: 

\begin{lemma}\label{lemma3.5}  If $Y$ has at worst   Du Bois singularities and $\mathcal{Y} \to \Spec A$ is a deformation of $Y$ over the Artin local ring $A$, then $\omega_{\mathcal{Y}/\Spec A} \cong \scrO_{\mathcal{Y}}$. 
\end{lemma} 
\begin{proof} This  argument is sketched in  \cite[Proof of Corollary 1.5]{FL22c}.  (There is also another, somewhat easier, proof in case  $H^1(Y; \scrO_Y) = 0$.)
\end{proof}

\begin{lemma}[Namikawa]\label{Namlemma}  If $Y$ has at worst  isolated rational  singularities and $H^1(Y; \scrO_Y) = 0$, then $H^0(Y; T^0_Y) =0$.
\end{lemma}  
\begin{proof}  We outline the proof given in \cite[p.\ 12]{NamikawaSugaku}. Choose an equivariant resolution $\pi\colon \hY \to Y$, i.e.\ a resolution such that $\pi_*T_{\hY} = T^0_Y$.  Since rational Gorenstein singularities are canonical, there exists an inclusion $T_{\hY} \to \Omega^{n-1}_{\hY}$. Thus it suffices to prove that $H^0(\hY;  \Omega^{n-1}_{\hY})=0$, i.e.\ that $h^{n-1, 0}(\hY) = 0$. By Hodge symmetry, 
$$h^{n-1, 0}(\hY) = h^{0, n-1}(\hY) = \dim H^{n-1}(\hY; \scrO_{\hY}) = \dim H^{n-1}(Y; \scrO_Y),$$
where we have used the Leray spectral sequence and the assumption of rational singularities to get the last inequality. By Serre duality on $Y$, $\dim H^{n-1}(Y; \scrO_Y) = \dim H^1(Y; \scrO_Y)$  which is $0$ by assumption. Hence $H^0(Y; T^0_Y) =0$.
\end{proof}

\subsection{The $T^1$ lifting property}   Let $Y$ be a reduced Cohen-Macaulay compact analytic space and let $\mathbf{Def}_Y$  denote the deformation functor from the category of Artin local $\Cee$-algebras to sets.  Let $A_k =\Cee[t]/(t^{k+1})$ and   let $B_k = A_k\otimes_\Cee A_1=\Cee[t,s]/(t^{k+1},s^2)$. There are $\Cee$-algebra homomorphisms $B_k \to A_k$,  $A_k \to A_{k-1}$ and $B_k \to B_{k-1}$.   Let $\psi_k\colon \mathcal{Y}_k\to \Spec A_k$ be a deformation of $Y$ over $\Spec A_k$ and let $\mathcal{Y}_{k-1}= \mathcal{Y}_k\times_{\Spec A_k}\Spec A_{k-1}$. Define
$$T^1(\mathcal{Y}_k/A_k) =\{((\mathcal{Y}, \psi): \mathcal{Y} \in \mathbf{Def}_Y(B_k) , \psi\colon  \mathcal{Y}\times _{\Spec B_k}\Spec A_k \to \mathcal{Y}_k \text{ is an isomorphism} \}.$$
The homomorphism $B_k \to B_{k-1}$ defines a  function $T^1(\mathcal{Y}_k/A_k)\to  T^1(\mathcal{Y}_{k-1}/A_{k-1})$.  Then the   $T^1$ lifting criterion  \cite{Kawamata}, Gross \cite[Theorem 1.14]{gross_defcy}, \cite{FanMan} says the following:

\begin{theorem}\label{theorem3.1}\label{T1}  If the natural function $$\mathbf{Def}_Y(B_k) \to \mathbf{Def}_Y(A_k)\times_{\mathbf{Def}_Y(A_{k-1})}\mathbf{Def}_Y(B_{k-1})$$ 
is surjective,  then $\mathbf{Def}_Y$ is unobstructed. Moreover, if the map $T^1(\mathcal{Y}_k/A_k)\to  T^1(\mathcal{Y}_{k-1}/A_{k-1})$ is surjective for every $k$ and every choice of $\mathcal{Y}_k\in \mathbf{Def}_Y(A_k)$, then $\mathbf{Def}_Y$ is unobstructed.
 \qed
\end{theorem}  

To check that the $T^1$ lifting criterion holds, we have the following result of \cite{Kawamata}:

\begin{lemma}\label{lemma3.2}    $T^1(\mathcal{Y}_k/A_k)\cong \Ext^1_{\scrO_{\mathcal{Y}_k}}( \Omega^1_{\mathcal{Y}_k/\Spec A_k}, \scrO_{\mathcal{Y}_k})$.  
\end{lemma}
\begin{proof} Given $\mathcal{Y} \in \mathbf{Def}_Y(B_k)$ such that  $\mathcal{Y}\times _{\Spec B_k}\Spec A_k \cong \mathcal{Y}_k$, we can view $\mathcal{Y}$ as a deformation of $\mathcal{Y}_k$ over $\Spec A_1 = \Spec \Cee[\varepsilon]$, where $\varepsilon^2 =0$. Then the conormal sequence for the closed subspace $\mathcal{Y}_k$ of $\mathcal{Y}$  is an exact sequence
$$  \scrO_{\mathcal{Y}_k}  \to \Omega^1_{\mathcal{Y}/\Spec B_k}|\mathcal{Y}_k \to \Omega^1_{\mathcal{Y}_k/\Spec A_k} \to 0.$$
Since  $Y$ is   assumed to be reduced and Cohen-Macaulay, the map $ \scrO_{\mathcal{Y}_k}  \to \Omega^1_{\mathcal{Y}/\Spec B_k}|\mathcal{Y}_k$ is injective since it is injective on a dense open subset. Thus $\mathcal{Y}$ defines an extension of  $\Omega^1_{\mathcal{Y}_k/\Spec A_k}$ by  $\scrO_{\mathcal{Y}_k}$, i.e.\ a class in $\Ext^1_{\scrO_{\mathcal{Y}_k}}( \Omega^1_{\mathcal{Y}_k/\Spec A_k}, \scrO_{\mathcal{Y}_k})$.  Standard arguments (cf.\ \cite[Theorem 1.1.10]{Sernesi} or \cite[(2.6)]{NamikawaSugaku}) show that this sets up a bijection from 
$T^1(\mathcal{Y}_k/A_k)$ to  $\Ext^1_{\scrO_{\mathcal{Y}_k}}( \Omega^1_{\mathcal{Y}_k/\Spec A_k}, \scrO_{\mathcal{Y}_k})$.
\end{proof} 

Given an extension $\mathcal{E}$ of $\Omega^1_{\mathcal{Y}_k/\Spec A_k}$ by  $\scrO_{\mathcal{Y}_k}$, taking the tensor product with $\scrO_{\mathcal{Y}_{k-1}}$ gives an exact sequence
$$\scrO_{\mathcal{Y}_{k-1}} \to \mathcal{E} \otimes_{\scrO_{\mathcal{Y}_k}} \scrO_{\mathcal{Y}_{k-1}} \to  \Omega^1_{\mathcal{Y}_k/\Spec A_k} \otimes_{\scrO_{\mathcal{Y}_k}} \scrO_{\mathcal{Y}_{k-1}} \to 0.$$
By base change, $ \Omega^1_{\mathcal{Y}_k/\Spec A_k} \otimes_{\scrO_{\mathcal{Y}_k}} \scrO_{\mathcal{Y}_{k-1}} \cong  \Omega^1_{\mathcal{Y}_{k-1}/\Spec A_{k-1}}$, and the homomorphism $\scrO_{\mathcal{Y}_{k-1}} \to \mathcal{E} \otimes_{\scrO_{\mathcal{Y}_k}} \scrO_{\mathcal{Y}_{k-1}}$ is injective since it is injective on a dense open subset. Thus the sequence 
$$0\to \scrO_{\mathcal{Y}_{k-1}} \to \mathcal{E} \otimes_{\scrO_{\mathcal{Y}_k}} \scrO_{\mathcal{Y}_{k-1}} \to  \Omega^1_{\mathcal{Y}_k/\Spec A_k} \otimes_{\scrO_{\mathcal{Y}_k}} \scrO_{\mathcal{Y}_{k-1}} \to 0$$ is exact. This defines a  homomorphism 
$$ \Ext^1_{\scrO_{\mathcal{Y}_k}}( \Omega^1_{\mathcal{Y}_k/\Spec A_k}, \scrO_{\mathcal{Y}_k}) \to  \Ext^1_{\scrO_{\mathcal{Y}_{k-1}}}( \Omega^1_{\mathcal{Y}_{k-1}/\Spec A_{k-1}}, \scrO_{\mathcal{Y}_{k-1}}).$$ More precisely, we have the following \cite[Lemma 2.3]{gross_defcy}:

\begin{lemma}\label{Grosslemma2.3}  If in addition $Y$ has isolated singularities, there are isomorphisms  
\begin{align*} \alpha_1\colon \Ext^1_{\scrO_{\mathcal{Y}_k}}( \Omega^1_{\mathcal{Y}_k/\Spec A_k}, \scrO_{\mathcal{Y}_{k-1}}) &\xrightarrow{\cong}   \Ext^1_{\scrO_{\mathcal{Y}_{k-1}}}( \Omega^1_{\mathcal{Y}_{k-1}/\Spec A_{k-1}}, \scrO_{\mathcal{Y}_{k-1}});\\
 \alpha_2\colon \Ext^2_{\scrO_{\mathcal{Y}_k}}( \Omega^1_{\mathcal{Y}_k/\Spec A_k}, \scrO_Y) &\xrightarrow{\cong}    \Ext^2_{\scrO_Y}( \Omega^1_Y, \scrO_Y). \qed
\end{align*} 
\end{lemma}

Unwinding the definitions shows the following:

\begin{lemma}\label{lemma3.3} Via the isomorphisms of Lemma~\ref{lemma3.2}, the diagram
$$\begin{CD}
T^1(\mathcal{Y}_k/A_k)@>>>  T^1(\mathcal{Y}_{k-1}/A_{k-1}) \\
@V{\cong}VV @VV{\cong}V\\
 \Ext^1_{\scrO_{\mathcal{Y}_k}}( \Omega^1_{\mathcal{Y}_k/\Spec A_k}, \scrO_{\mathcal{Y}_k}) @>>>  \Ext^1_{\scrO_{\mathcal{Y}_{k-1}}}( \Omega^1_{\mathcal{Y}_{k-1}/\Spec A_{k-1}}, \scrO_{\mathcal{Y}_{k-1}}),
\end{CD}$$
is commutative.  Thus   there is an exact sequence
$$T^1(\mathcal{Y}_k/A_k)\to T^1(\mathcal{Y}_{k-1}/A_{k-1})\xrightarrow{\delta}  \mathbb{T}^2_Y,$$
where $\delta$ is identified via Lemma~\ref{Grosslemma2.3} with the coboundary map associated to the long exact Ext sequence arising from 
$$0\to \scrO_Y \to  \scrO_{\mathcal{Y}_k} \to \scrO_{\mathcal{Y}_{k-1}} \to 0. \qed$$
\end{lemma} 

At this point, it is straightforward to prove Theorem~\ref{mainthm}  via   Namikawa's method \cite[Proof of Theorem 1]{namtop}, \cite{NamikawaSugaku}, under the added hypothesis that $H^0(Y; T^0_Y) = 0$ (which by Lemma~\ref{Namlemma}  is automatic in case $Y$ has rational singularities). In this case, $H^n(Y; \Omega^1_Y)$ is Serre dual to $\Hom(\Omega^1_Y, \scrO_Y) = H^0(Y; T^0_Y) = 0$.  In particular, given a deformation $\psi_k\colon \mathcal{Y}_k\to \Spec A_k$ of $Y$ over $\Spec A_k$, $R^n\psi_k{}_*\Omega^1_{\mathcal{Y}_k/\Spec A_k} =0$. Then by  base change \cite[III 12.11]{Hartshorne}, the map $$R^{n-1}\psi_k{}_*\Omega^1_{\mathcal{Y}_k/\Spec A_k}\to H^{n-1}(Y; \Omega^1_Y)$$ is surjective. By Lemma~\ref{lemma0.4}, the map $R^{n-2}\psi_k{}_*\Omega^1_{\mathcal{Y}_k/\Spec A_k}\to H^{n-2}(Y; \Omega^1_Y)$  is surjective. Again by base change,  $R^{n-1}\psi_k{}_*\Omega^1_{\mathcal{Y}_k/\Spec A_k}= H^{n-1}(\mathcal{Y}_k; \Omega^1_{\mathcal{Y}_k/\Spec A_k})$ is locally free and compatible with base change, i.e.\ the map 
$$H^{n-1}(\mathcal{Y}_k; \Omega^1_{\mathcal{Y}_k/\Spec A_k}) \otimes _{A_k} A_{k-1} \to H^{n-1}(\mathcal{Y}_{k-1}; \Omega^1_{\mathcal{Y}_{k-1}/\Spec A_{k-1}})$$
is an isomorphism. The argument of \cite[p.\ 13, (2.17) and (9)]{NamikawaSugaku} then shows that 
$$ \Ext^1_{\scrO_{\mathcal{Y}_k}}( \Omega^1_{\mathcal{Y}_k/\Spec A_k}, \scrO_{\mathcal{Y}_k}) \otimes _{A_k}A_{k-1} \to \Ext^1_{\scrO_{\mathcal{Y}_{k-1}}}( \Omega^1_{\mathcal{Y}_{k-1}/\Spec A_{k-1}}, \scrO_{\mathcal{Y}_{k-1}})$$ is an isomorphism, so that $T^1$ lifting holds. 

\subsection{Generalized  $T^1$ lifting}\label{subsection3.2}    For the moment, let  $Y$ be a compact analytic space with isolated Cohen-Macaulay singularities, let $\mathbf{Def}_Y$ be the deformation functor for $Y$ and let $\mathbf{Def}_{Y, Z}$ be the local deformation functor for the singular points of $Y$. Thus there is  a morphism $\mathbf{Def}_Y \to \mathbf{Def}_{Y, Z}$, which at the level of Zariski tangent spaces is the map $\mathbb{T}^1_Y \to H^0(Y; T^1_Y) =\bigoplus_{x\in Z}H^0(T^1_{Y,x})$. 

\begin{definition}\label{defgenT1}    Let $\delta$ be  the map defined in Lemma~\ref{lemma3.3}  and let $\ell \colon  \mathbb{T}^2_Y \cong \Ext^2_{\scrO_Y}(\Omega^1_Y, \scrO_Y) \to H^0(Y;\mathit{Ext}^2_{\scrO_Y}(\Omega^1_Y, \scrO_Y)  )$ be the natural map.  The \textsl{generalized $T^1$ lifting property}  holds for $Y$ if
the restriction of $\ell$ to $\im \delta$ is injective. Note that, in the lci case, $H^0(Y;\mathit{Ext}^2_{\scrO_Y}(\Omega^1_Y, \scrO_Y)  )=0$ so that the generalized $T^1$ lifting property reduces to the $T^1$ lifting criterion.
\end{definition} 

The following is then due to  \cite[Theorem 1.16]{gross_defcy}. For related versions, see also    \cite{Kawamata2}, \cite{Kawamata-E}, \cite{Ran0}, and   \cite[Theorem 2.2]{FanMan}.

\begin{theorem}\label{conseqgenT1}   Suppose that $\mathbf{Def}_Y$ is prorepresentable and the generalized $T^1$ lifting property holds for $Y$. 
Then 
\begin{enumerate}
\item[\rm(i)]  If  the map $\mathbb{T}^1_Y \to H^0(Y; T^1_Y)$ is surjective, then the morphism $\mathbf{Def}_Y \to \mathbf{Def}_{Y, Z}$ is smooth. In particular, if $(S,0)$ is a germ which is a   locally semiuniversal  deformation of $\mathbf{Def}_{Y, Z}$, then the base of the  locally semiuniversal  deformation of $Y$ is isomorphic to the germ  $(H^1(Y;T^0_Y)\times S,0)$. 
\item[\rm(ii)] In general, if the dimension of the cokernel of the map $\mathbb{T}^1_Y \to H^0(Y; T^1_Y)$ is $r$, then there exist $r$ functions $f_1, \dots, f_r\in \mathfrak{m}$, where $\mathfrak{m}$ is the maximal ideal of $0$ in  the local ring $\scrO_{H^1(Y;T^0_Y)\times S,0}$, such that the germ of the subspace defined by $f_1, \dots, f_r$ has the same underlying reduced subspace as that of the germ of the base of the semiuniversal deformation of $Y$.  \qed
\end{enumerate}
\end{theorem}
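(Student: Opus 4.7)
The plan is to derive this theorem from Theorem~\ref{surjthm2} by applying the relative $T^1$-lifting machinery for morphisms of deformation functors developed by Kawamata \cite{Kawamata2}, Ran \cite{Ran0}, and Fantechi-Manetti \cite{FanMan}, and specialized to singular Calabi-Yau varieties by Gross \cite[Theorem 1.9]{gross_defcy}. The first step is to recast Theorem~\ref{surjthm2} as a relative $T^2$-injectivity statement. By Serre duality on $Y$ (using $\omega_Y\cong\scrO_Y$), $\Ext^2(\Omega^1_Y,\scrO_Y)\cong H^{n-2}(Y;\Omega^1_Y)^\vee$; by Grothendieck local duality at each isolated Gorenstein point of $Z$, $H^{n-2}_Z(Y;\Omega^1_Y)\cong H^0(Y;T^2_Y)^\vee$; and by the relative duality argument from the last paragraph of the proof of Theorem~\ref{surjthm} (using that $A$ is self-injective and $\omega_{\mathcal{Y}/\Spec A}\cong\scrO_\mathcal{Y}$), $\Ext^2(\Omega^1_{\mathcal{Y}/\Spec A},\scrO_\mathcal{Y})\cong\Hom_A(H^{n-2}(\mathcal{Y};\Omega^1_{\mathcal{Y}/\Spec A}),A)$.

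Dualizing the surjection in Theorem~\ref{surjthm2} through these identifications yields an injection
$$\ker\bigl(\Ext^2(\Omega^1_Y,\scrO_Y)\to H^0(Y;T^2_Y)\bigr)\hookrightarrow\Ext^2(\Omega^1_{\mathcal{Y}/\Spec A},\scrO_\mathcal{Y})$$
for every Artinian deformation $\mathcal{Y}\to\Spec A$. Following the identification of this map with the obstruction comparison as at the end of Section~\ref{section3} (via Gross \cite[p.\ 199]{gross_defcy}), this is the relative $T^2$-injectivity required to apply the standard smoothness criterion to the morphism of deformation functors $\phi\colon\mathbf{Def}_Y\to\mathbf{Def}_{Y,Z}$: when the tangent map $\mathbb{T}^1_Y\to H^0(Y;T^1_Y)$ is surjective, this injectivity forces $\phi$ to be smooth. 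Part (i) then follows: a smooth morphism of germs of analytic spaces $(B,0)\to(S,0)$ is, in the category of germs, isomorphic to the projection from a product with a smooth germ of dimension $\dim\ker(\mathbb{T}^1_Y\to H^0(Y;T^1_Y))=\dim H^1(Y;T^0_Y)$, which gives the desired description of the semiuniversal base.

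For part (ii), I would invoke the analysis of morphisms of deformation functors that fail to be smooth only at the tangent level \cite[Theorem 2.2]{FanMan} (compare \cite[Theorem 1.9]{gross_defcy}): the relative $T^2$-injectivity above still holds without any assumption on the tangent map, so the obstructions are governed entirely by the $t$-dimensional cokernel of $\mathbb{T}^1_Y\to H^0(Y;T^1_Y)$. The Fantechi-Manetti framework then produces exactly $t$ ``reduced obstruction'' functions $f_1,\ldots,f_t$ in the maximal ideal of $(H^1(Y;T^0_Y)\times S,0)$ whose common vanishing agrees set-theoretically with the semiuniversal base of $Y$. The main obstacle will be the bookkeeping: one must verify carefully that the dualized $T^2$-injectivity coincides with the obstruction map $\Ext^2(\Omega^1_Y,\scrO_Y)\to\Ext^2(\Omega^1_{\mathcal{Y}/\Spec A},\scrO_\mathcal{Y})$ appearing in the smoothness criterion, and that the relative Serre and local duality statements used here are valid in the Artinian setting; once this is in place, both parts follow formally from Theorem~\ref{surjthm2} and the cited deformation-theoretic machinery.
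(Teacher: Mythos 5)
Your proposal is correct and follows essentially the same route as the paper, which likewise deduces the theorem from Theorem~\ref{surjthm2} via the relative $T^1$-lifting/$T^2$-injectivity machinery of \cite{Kawamata2}, \cite{Ran0}, \cite[Theorem 2.2]{FanMan} and \cite[Theorem 1.9]{gross_defcy}. The only difference is that you spell out the Serre/local/relative duality identifications (which the paper leaves implicit, since the argument is a direct adaptation of Gross's), and these are exactly the identifications the cited sources use, so no gap arises.
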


 We return now to the assumption that $Y$ is a singular Calabi-Yau variety of dimension $n\ge 3$ with isolated singularities. Before we state the main result that we need, we introduce the following notation. For an $A_k$-module $M$, we write $M\spcheck = \Hom_{A_k}(M, A_k)$. If  $M$ is actually an $A_{k-1}$-module, i.e.\ if $M$ is annihilated by $t^k$, then  $M\spcheck \cong \Hom_{A_{k-1}}(M, A_{k-1})$,   viewing $A_{k-1}$ as the submodule $tA_k$ of $A_k$, and similarly if $M$ is annihilated by $t$. Since $A_k$ is Gorenstein, the functor $M \mapsto M\spcheck$ is exact.

 With that said, in order to apply Theorem~\ref{conseqgenT1}, the main point is the following result of Gross (\cite[Lemmas 2.3 and 2.4,   (2.16), and Remark 2.18]{gross_defcy}):

\begin{theorem}\label{proposition3.7}  Let $\alpha_i$, $i=1,2$  be as defined in Lemma~\ref{Grosslemma2.3} and let 
$$\delta'\colon \Ext^1_{\scrO_{\mathcal{Y}_k}}( \Omega^1_{\mathcal{Y}_k/\Spec A_k}, \scrO_{\mathcal{Y}_{k-1}})  \to  \Ext^2_{\scrO_{\mathcal{Y}_k}}( \Omega^1_{\mathcal{Y}_k/\Spec A_k}, \scrO_Y)$$
be the homomorphism in  the long exact Ext sequence coming from applying $\Ext^i_{\scrO_{\mathcal{Y}_k}}( \Omega^1_{\mathcal{Y}_k/\Spec A_k}, -)$ to the exact sequence
$$0 \to \scrO_Y \to \scrO_{\mathcal{Y}_k} \to  \scrO_{\mathcal{Y}_{k-1}} \to 0,$$
i.e.\ $\delta'$ corresponds to $\delta$ as defined in  Lemma~\ref{lemma3.3}.
\begin{enumerate} 
\item[\rm(i)] There is an isomorphism of  exact sequences, i.e.\ a commutative diagram with exact columns such that the horizontal maps are all isomorphisms:
$$\begin{CD}
   H^{n-1}(\mathcal{Y}_k; \Omega^1_{\mathcal{Y}_k/\Spec A_k})\spcheck  @>{\tau_0}>>     \Ext^1_{\scrO_{\mathcal{Y}_k}}( \Omega^1_{\mathcal{Y}_k/\Spec A_k}, \scrO_{\mathcal{Y}_k}) @>{=}>> T^1(\mathcal{Y}_k/A_k) \\
   @VVV @VVV @VVV \\
    H^{n-1}(\mathcal{Y}_{k-1}; \Omega^1_{\mathcal{Y}_{k-1}/\Spec A_{k-1}})\spcheck  @>{\tau_1}>> \Ext^1_{\scrO_{\mathcal{Y}_k}}( \Omega^1_{\mathcal{Y}_k/\Spec A_k}, \scrO_{\mathcal{Y}_{k-1}})  @>{\alpha_1}>> T^1(\mathcal{Y}_{k-1}/A_{k-1}) \\
    @VVV @VV{\delta'}V @VV{\delta}V \\
    H^{n-2}(Y; \Omega^1_Y)\spcheck   @>{\tau_2}>> \Ext^2_{\scrO_{\mathcal{Y}_k}}( \Omega^1_{\mathcal{Y}_k/\Spec A_k}, \scrO_Y) @>{\alpha_2}>>   \mathbb{T}^2_Y\\
    @VVV @VVV @. \\
   H^{n-2}(\mathcal{Y}_k; \Omega^1_{\mathcal{Y}_k/\Spec A_k})\spcheck @>{\cong}>> \Ext^2_{\scrO_{\mathcal{Y}_k}}( \Omega^1_{\mathcal{Y}_k/\Spec A_k}, \scrO_{\mathcal{Y}_k}) @.
\end{CD}$$
  \item[\rm(ii)]  There is an isomorphism $\tau\colon H_Z^{n-2}(Y;  \Omega^1_Y)\spcheck \to H^0(Y; \mathit{Ext}^2_{\scrO_Y}(\Omega^1_Y, \scrO_Y))$ such that the following diagram commutes:
$$\begin{CD} 
 H^{n-2}(Y; \Omega^1_Y)\spcheck  @>>>  H^{n-2}_Z(Y; \Omega^1_Y)\spcheck \\
 @V{\alpha_2\circ \tau_2}VV  @V{\tau}V{\cong}V \\
 \Ext^2_{\scrO_Y}(\Omega^1_Y, \scrO_Y) @>{\ell}>> H^0(Y; \mathit{Ext}^2_{\scrO_Y}(\Omega^1_Y, \scrO_Y)). \qed
 \end{CD}$$
 \end{enumerate}
 \end{theorem}

 We can now give a  proof of   Theorem~\ref{mainthm} that does not need the assumption  $H^0(Y; T^0_Y) =0$:
 
 \begin{proof}[Proof of Theorem~\ref{mainthm}]   By Theorem~\ref{T1}, we must prove that the map $T^1(\mathcal{Y}_k/A_k) \to  T^1(\mathcal{Y}_{k-1}/A_{k-1})$ is surjective for every $k$ and every choice of $\mathcal{Y}_k$.  By Theorem~\ref{proposition3.7}, given $\mathcal{Y}_k$, we must show that the map  $H^{n-1}(\mathcal{Y}_{k-1}; \Omega^1_{\mathcal{Y}_{k-1}/\Spec A_{k-1}}) \to  H^{n-1}(\mathcal{Y}_k; \Omega^1_{\mathcal{Y}_k/\Spec A_k})$ is injective. It suffices to prove that the coboundary map $\partial \colon H^{n-2}(Y; \Omega^1_Y) \to H^{n-1}(\mathcal{Y}_{k-1}; \Omega^1_{\mathcal{Y}_{k-1}/\Spec A_{k-1}})$ is zero, or  equivalently   that the  map $H^{n-2}(\mathcal{Y}_k; \Omega^1_{\mathcal{Y}_k/\Spec A_k}) \to H^{n-2}(Y; \Omega^1_Y)$ is surjective. In the lci case, this is a consequence of Lemma~\ref{lemma0.4}. 
\end{proof}

In case $Y$ does not have lci singularities, there is the following:

\begin{theorem}\label{genT1thm}   Suppose that $Y$ is a singular Calabi-Yau variety of dimension $n\ge 3$ such that
\begin{enumerate}
\item[\rm(i)] The $\partial\bar\partial$-lemma holds for $\hY$.
\item[\rm(ii)] The singularities of $Y$ are isolated and Du Bois.
\item[\rm(iii)]  $H^1(Y; \scrO_Y) = 0$.
\end{enumerate}
Then $Y$ satisfies the generalized $T^1$ lifting property. 
\end{theorem} 
\begin{proof} By the  commutative diagram in Theorem~\ref{proposition3.7}(ii), the map $ H_Z^{n-2}(Y;  \Omega^1_Y) \to H^{n-2}(Y; \Omega^1_Y)$ is dual to the map 
$$\ell\colon \mathbb{T}^2_Y =\Ext^2_{\scrO_Y}(\Omega_Y, \scrO_Y) \to H^0(Y; T^2_Y) = H^0(Y; \mathit{Ext}^2_{\scrO_Y}(\Omega_Y, \scrO_Y)).$$
By  Theorem~\ref{surjthm2}, the map $H^{n-2}(\mathcal{Y}_k; \Omega^1_{\mathcal{Y}_k/\Spec A_k})\to H^{n-2}(Y; \Omega^1_Y)/ \im H_Z^{n-2}(Y;  \Omega^1_Y)$   is surjective. Then Theorem~\ref{proposition3.7} implies that the map $\ell|\im \delta$ of Definition~\ref{defgenT1} is injective, i.e.\ that the generalized $T^1$ lifting property holds. 
\end{proof} 

\begin{corollary} Suppose that $Y$ is a singular Calabi-Yau variety of dimension $n\ge 3$ such that
\begin{enumerate}
\item[\rm(i)] The $\partial\bar\partial$-lemma holds for $\hY$.
\item[\rm(ii)] The singularities of $Y$ are isolated and Du Bois.
\item[\rm(iii)]  $ H^1(Y; \scrO_Y) = 0$.
\item[\rm(iv)] The functor $\mathbf{Def}_Y$ is prorepresentable. (By Lemma~\ref{Namlemma}, this is automatic if the singularities are rational.) 
\end{enumerate}
Then   the conclusions of Theorem~\ref{conseqgenT1} hold for $\mathbf{Def}_Y$.  \qed
\end{corollary}

These methods have been used in  \cite[\S4]{gross_defcy} to give examples of smoothings of singular Calabi-Yau varieties of dimension $3$ with isolated rational singularities which are not lci.

\subsection{Deformations of a log Calabi-Yau pair}\label{subsection3.3} In this section, unless otherwise specified, we assume that $Y$ is a compact, reduced, normal and Cohen-Macaulay analytic variety of dimension $n\ge 3$ with isolated singularities, and that $D=\bigcup_iD_i\subseteq Y$ is a divisor with simple normal crossings contained in the smooth locus of $Y$.    Let   $\mathbf{Def}_{Y;D_1,\dots, D_k}$ be the functor of log deformations of the pair $(Y,D)$ as defined in \S\ref{subsection2.4}. We begin by listing some standard facts about $\mathbf{Def}_{Y;D_1,\dots, D_k}$, whose proofs are straightforward modifications of the usual proofs in case $Y$ is smooth. (Compare for example \cite[Proposition 3.4.17]{Sernesi}.)

\begin{proposition} Suppose that $(Y,D)$   satisfies the above assumptions. 
\begin{enumerate}
\item[\rm(i)]  The Zariski tangent space to $\mathbf{Def}_{Y;D_1,\dots, D_k}$ is $\mathbb{T}^1_Y(-\log D) = \Ext^1_{\scrO_Y}(\Omega^1_Y(\log D), \scrO_Y)$ and an obstruction space is $\mathbb{T}^2_Y(-\log D) = \Ext^2_{\scrO_Y}(\Omega^1_Y(\log D), \scrO_Y)$.
\item[\rm(ii)]  There are natural isomorphism   $ \mathit{Ext}^i_{\scrO_Y}(\Omega^1_Y(\log D), \scrO_Y) \cong  \mathit{Ext}^i_{\scrO_Y}(\Omega^1_Y, \scrO_Y)$, as $\Omega^1_Y(\log D)$ is locally free around $D$ and is isomorphic to $\Omega^1_Y$ near $Z$. Thus there are natural maps $\mathbb{T}^i_Y(-\log D)\to H^0(Y; T^i_Y) = H^0(Y; \mathit{Ext}^i_{\scrO_Y}(\Omega^1_Y, \scrO_Y))$, $i=1,2$.  
\item[\rm(iii)]  Let $(\mathcal{Y}_k, \mathcal{D}_k)$ be an element of $\mathbf{Def}_{Y;D_1,\dots, D_k}(A_k)$ and define $T^1((\mathcal{Y}_k,\mathcal{D}_k)/A_k)$ in the obvious way. Then 
$$T^1((\mathcal{Y}_k,\mathcal{D}_k)/A_k)\cong \Ext^1_{\scrO_{\mathcal{Y}_k}}( \Omega^1_{\mathcal{Y}_k/\Spec A_k}(\log \mathcal{D}_k), \scrO_{\mathcal{Y}_k}). \qed$$
\end{enumerate}
\end{proposition} 

\begin{definition}\label{defgenT1pairs} (i)    The  \textsl{$T^1$ lifting property holds for the pair $(Y,D)$} if, for every $(\mathcal{Y}_k, \mathcal{D}_k)$ a log   deformation of the pair $(Y,D)$ over $\Spec A_k$ and with  $(\mathcal{Y}_{k-1}, \mathcal{D}_{k-1})$  the  induced  log deformation  over $\Spec A_{k-1}$, the map
$$T^1((\mathcal{Y}_k,\mathcal{D}_k)/A_k) \to  T^1((\mathcal{Y}_{k-1},\mathcal{D}_{k-1})/A_{k-1})$$
is surjective. In this case, by a variant of Theorem~\ref{T1},  $\mathbf{Def}_{Y;D_1,\dots, D_k}$ is  unobstructed.

\smallskip
\noindent (ii)  For a log deformation  $(\mathcal{Y}_k, \mathcal{D}_k)$ of $(Y,D)$ over $\Spec A_k$,  let    $\delta$ be  the analogue of the map defined in Lemma~\ref{lemma3.3}, i.e.\ $\delta$ corresponds to  the connecting homomorphism
$$\begin{CD}
\Ext^1_{\scrO_{\mathcal{Y}_k}}( \Omega^1_{\mathcal{Y}_k/\Spec A_k}(\log \mathcal{D}_k), \scrO_{\mathcal{Y}_{k-1}})  @>{\delta'}>>  \Ext^2_{\scrO_{\mathcal{Y}_k}}( \Omega^1_{\mathcal{Y}_k/\Spec A_k}(\log \mathcal{D}_k), \scrO_Y)\\
@V{\cong}VV @V{\cong}VV \\
\Ext^1_{\scrO_{\mathcal{Y}_{k-1}}}( \Omega^1_{\mathcal{Y}_{k-1}/\Spec A_{k-1}}(\log \mathcal{D}_{k-1}), \scrO_{\mathcal{Y}_{k-1}})  @>{\delta'}>>  \Ext^2_{\scrO_Y}( \Omega^1_Y(\log D), \scrO_Y),
\end{CD}$$
where the vertical isomorphisms are a consequence of the analogue of  Lemma~\ref{Grosslemma2.3} in this setting.
Let $\ell$ be the natural map coming from the Ext spectral sequence:
$$\ell \colon  \mathbb{T}^2_Y(-\log D) \cong \Ext^2_{\scrO_Y}(\Omega^1_Y(\log D), \scrO_Y) \to H^0(Y;\mathit{Ext}^2_{\scrO_Y}(\Omega^1_Y(\log D), \scrO_Y)   \cong H^0(Y;\mathit{Ext}^2_{\scrO_Y}(\Omega^1_Y, \scrO_Y)  ).$$  
 As in Definition~\ref{defgenT1} , the \textsl{generalized $T^1$ lifting property}  holds for the pair $(Y,D)$ if , for every deformation  $(\mathcal{Y}_k, \mathcal{D}_k)$ of $(Y,D)$ over $\Spec A_k$
the restriction of $\ell$ to $\im \delta$ is injective. In this case, the natural variant  of Theorem~\ref{conseqgenT1}  holds.
\end{definition}

\begin{definition} A  \textsl{singular log Calabi-Yau pair}  $(Y,D)$ is a pair satisfying the assumptions of this section, such that  $Y$ is  singular   and   $\omega_Y \cong \scrO_Y(-D)$ (hence in particular $Y$ is Gorenstein).
\end{definition}

Then  we have the following:

\begin{theorem}\label{pairmainthm} Let $(Y,D)$ be a singular log Calabi-Yau pair of dimension $n \ge 3$, such that:
\begin{enumerate}
\item[\rm(i)]  The $\partial\bar\partial$-lemma holds for some resolution of singularities $\hY$ of $Y$ and all of the $k$-fold intersections $D_{i_1}\cap\cdots \cap D_{i_k}$.
\item[\rm(ii)]  The singularities of $Y$ are isolated, lci and Du Bois.
\item[\rm(iii)]  $H^1(Y; \scrO_Y) = 0$.
\end{enumerate} Then the functor $\mathbf{Def}_{Y;D_1,\dots, D_k}$ is  unobstructed.

Without  the assumption in (ii) that the singularities are local complete intersections,   the generalized $T^1$ lifting property holds for the pair $(Y,D)$.  Hence, if  the functor $\mathbf{Def}_{Y;D_1,\dots, D_k}$  is prorepresentable, then the natural analogue of  Theorem~\ref{conseqgenT1} holds for $\mathbf{Def}_{Y;D_1,\dots, D_k}$. 
\end{theorem}
\begin{proof} By assumption, $\omega_Y \cong \scrO_Y(-D)$. By Serre duality, the assumption that $H^1(Y; \scrO_Y) = 0$ implies that $H^{n-1}(Y; \omega_Y) = H^{n-1}(Y; \scrO_Y(-D)) = 0$. Likewise,  $H^{n-i}(Y; \Omega^1_Y(\log D)(-D))$ is Serre dual to $\Ext^i_{\scrO_Y}(\Omega^1_Y(\log D), \scrO_Y)$. If $(\mathcal{Y}, \mathcal{D})$ is a log deformation of $(Y, D)$ over $\Spec A$, where $A$ is a Gorenstein  Artin local $\Cee$-algebra, then the hypothesis that $H^1(Y; \scrO_Y) = 0$ implies that $\scrO_{\mathcal{Y}}(-\mathcal{D}) \cong \omega_{\mathcal{Y}/\Spec A}$. Thus 
$$\Hom_A(H^{n-i}(\mathcal{Y}; \Omega^1_{\mathcal{Y}/\Spec A}(\log\mathcal{D})(-\mathcal{D})), A) \cong \Ext^i_{\scrO_{\mathcal{Y}}}(\Omega^1_{\mathcal{Y}/\Spec A}(\log\mathcal{D}), \scrO_{\mathcal{Y}}).$$ 

There is a natural analogue of Theorem~\ref{proposition3.7} which is   obtained by  replacing  $\Omega^1_{\mathcal{Y}_k/\Spec A_k}$   by  $\Omega^1_{\mathcal{Y}_k/\Spec A_k}(\log\mathcal{D}_k)(-\mathcal{D}_k)$ throughout and making other   obvious modifications,  and the proof goes over verbatim. In the lci case, the map 
$$H^{n-2}(\mathcal{Y}_k; \Omega^1_{\mathcal{Y}_k/\Spec A_k}(\log\mathcal{D})(-\mathcal{D})) \to H^{n-2}(Y; \Omega^1_Y(\log D)(-D))$$ is surjective by the first part of Theorem~\ref{pairsurjthm}. Hence, by the argument used in \S\ref{subsection3.2} to prove Theorem~\ref{mainthm},  $T^1$ lifting holds for the pair $(Y,D)$, so that $\mathbf{Def}_{Y;D_1,\dots, D_k}$ is unobstructed. A similar argument shows that the generalized $T^1$ lifting property holds in the general case.
\end{proof}

\begin{remark}\label{lastremark}  (i) Via Poincar\'e residue, there is an exact sequence
$$0 \to \Omega^1_Y\to \Omega^1_Y(\log D) \to \bigoplus_i\scrO_{D_i} \to 0.$$
Since $\mathit{Ext}^i_{\scrO_Y}(\scrO_{D_i}, \scrO_Y) = 0$ for $i\neq 1$ and $\mathit{Ext}^1_{\scrO_Y}(\scrO_{D_i}, \scrO_Y) \cong N_{D_i/Y}$, the normal bundle of $D_i$ in $Y$, there is an exact sequence
\begin{align*}
0 \to \mathbb{T}^0_Y(-\log D) &\to \mathbb{T}^0_Y \to \bigoplus_iH^0(D_i; N_{D_i/Y}) \to \mathbb{T}^1_Y(-\log D) \to \mathbb{T}^1_Y \to  \bigoplus_iH^1(D_i; N_{D_i/Y})\\
&\to \mathbb{T}^2_Y(-\log D) \to \mathbb{T}^2_Y.
\end{align*}
If $H^1(D_i; N_{D_i/Y}) =0$ for every $i$, i.e.\ if the $D_i$ are stable submanifolds of $Y$, then it is well-known  that the morphism of functors  $\mathbf{Def}_{Y;D_1,\dots, D_k} \to \mathbf{Def}_Y$ is smooth. (Compare \cite[Proposition 3.4.23 and Proposition 2.3.6]{Sernesi}.) In this case, if  $\mathbf{Def}_{Y;D_1,\dots, D_k}$ is unobstructed, then $ \mathbf{Def}_Y$ is  unobstructed as well.

\noindent (ii) Suppose that $Y$ has rational singularities and $\omega_Y^{-1}$ is nef and big. A standard argument shows that $H^{n-1}(Y; \omega_Y) = H^1(Y; \scrO_Y) = 0$ and   $H^1(Y; \scrO_Y(D)) =0$. Thus, in the lci case $\mathbf{Def}_{Y;D_1,\dots, D_k}$ is unobstructed.  If  in addition $D$ is smooth, then $ \mathbf{Def}_Y$ is  unobstructed as well.
\end{remark}

Adapting the method of proof of \cite[Theorem 1.2]{Sano2} gives the following:

\begin{theorem}\label{Fanomainthm} Let $Y$ be a compact   analytic space with isolated lci rational singularities of dimension at least $3$ such  that $\omega_Y^{-1}$ is  nef and big. Then $ \mathbf{Def}_Y$ is  unobstructed. 
\end{theorem} 
\begin{proof}  By the base point free theorem (e.g.\ \cite[(9.3)]{CKM}),    there exists a positive integer  $m$ and a smooth divisor $D$ on $Y$ such that $\omega_Y^{\otimes m} \cong \scrO_Y(-D)$. In particular, $D$ is contained in the smooth locus of $Y$.  Since $\omega_Y^{-1}$ is nef and big, $\pi^*\omega_Y^{-1}$ is also nef and big. By Kawamata-Viehweg vanishing, $H^{n-1}(\hY; \pi^*\omega_Y) =0$. Since $Y$ has rational singularities, $H^{n-1}(Y;\omega_Y) =0$, 
and hence $H^1(Y; \scrO_Y) =0$ as well.  

Next, we claim that $ \mathbf{Def}_{(Y,D)}$ is  unobstructed. By $T^1$ lifting  (Definition~\ref{defgenT1pairs}), it suffices to show that, for a log deformation   $(\mathcal{Y}_k, \mathcal{D}_k)$ of $(Y,D)$ over $\Spec A_k$, the map 
$$  \Ext^1_{\scrO_{\mathcal{Y}_k}}( \Omega^1_{\mathcal{Y}_k/\Spec A_k}(\log \mathcal{D}_k), \scrO_{\mathcal{Y}_k})  \to    \Ext^1_{\scrO_{\mathcal{Y}_{k-1}}}( \Omega^1_{\mathcal{Y}_{k-1}/\Spec A_{k-1}}(\log \mathcal{D}_{k-1}), \scrO_{\mathcal{Y}_{k-1}}) $$
is surjective. By the argument used in \S\ref{subsection3.2} to prove Theorem~\ref{mainthm}, it suffices to prove that the map 
$$H^{n-2}(\mathcal{Y}_k;  \Omega^1_{\mathcal{Y}_k/\Spec A_k}(\log \mathcal{D}_k)\otimes \omega_{\mathcal{Y}_k/\Spec A_k}) \to H^{n-2}(Y; \Omega^1_Y(\log D)\otimes  \omega_Y)$$
is surjective. Note that $\omega_{\mathcal{Y}_k/\Spec A_k}^{\otimes m} \cong \scrO_{\mathcal{Y}_k}(- \mathcal{D}_k)$ since $H^1(Y; \scrO_Y) =0$. 

Let $\nu\colon W \to Y$ be the cyclic cover of degree $m$ branched along $D$ defined by the isomorphism $\omega_Y^{\otimes m} \cong \scrO_Y(-D)$ and let $D'\subseteq W$ be the (reduced) preimage of $D$. Thus   $\nu^*\omega_Y = \scrO_W(-D')$,   $W$ is projective, and $D'$ is an ample divisor on $Z$. Since $Y$ has rational singularities and $\nu$ is \'etale in a neighborhood of $Z$, $W$ has rational singularities as well and $H^{n-1}(W;  \scrO_W(-D')) =0$ by another application of  Kawamata-Viehweg vanishing. Taking the cyclic cover of a deformation  $(\mathcal{Y}_k, \mathcal{D}_k)$ of $(Y,D)$ over $\Spec A_k$ branched along  $\mathcal{D}_k$ yields  a deformation  $(\mathcal{W}_k, \mathcal{D}_k')$ of the pair $(W,D')$ over $\Spec A_k$, with $\nu^* \omega_{\mathcal{Y}_k/\Spec A_k}^{\otimes m}\cong \scrO_{\mathcal{W}_k}(-\mathcal{D}_k')$. By Theorem~\ref{pairsurjthm}, the natural map
\begin{equation*}
H^{n-2}(\mathcal{W}_k;  \Omega^1_{\mathcal{W}_k/\Spec A_k}(\log \mathcal{D}_k')(-\mathcal{D}_k')) \to H^{n-2}(W; \Omega^1_W(\log D')(-D'))\tag{$*$}
\end{equation*}
is surjective. On the other hand, $\Omega^1_W(\log D')(-D') = \nu^*(\Omega^1_Y(\log D)\otimes \omega_Y)$, and hence
\begin{align*}
H^{n-2}(W; \Omega^1_W(\log D')(-D')) &= H^{n-2}(Y; \nu_*\nu^*(\Omega^1_Y(\log D)\otimes \omega_Y)) \\
&= H^{n-2}(Y; \Omega^1_Y(\log D)\otimes \omega_Y\otimes \nu_*\scrO_W).
\end{align*} 
 The direct image $\nu_*\scrO_W \cong \bigoplus_{i=0}^{m-1}\omega_Y^i$ is a direct sum of eigenspaces for the natural $\boldsymbol{\mu}_m$-action. Similarly 
$$H^{n-2}(Y; \Omega^1_Y(\log D)\otimes \omega_Y\otimes \nu_*\scrO_W) \cong \bigoplus_{i=0}^{m-1}H^{n-2}(Y; \Omega^1_Y(\log D)\otimes \omega_Y\otimes\omega_Y^i),$$
and $H^{n-2}(Y; \Omega^1_Y(\log D)\otimes \omega_Y)$ is the trivial $\boldsymbol{\mu}_m$-eigenspace   of $H^{n-2}(Y; \Omega^1_Y(\log D)\otimes \omega_Y\otimes \nu_*\scrO_W)$. Analogous  results hold  for $H^{n-2}(\mathcal{W}_k;  \Omega^1_{\mathcal{W}_k/\Spec A_k}(\log \mathcal{D}_k')(-\mathcal{D}_k'))$. Since the map in $(*)$ is surjective and respects the various eigenspaces,  the map 
$$H^{n-2}(\mathcal{Y}_k;  \Omega^1_{\mathcal{Y}_k/\Spec A_k}(\log \mathcal{D}_k)\otimes \omega_{\mathcal{Y}_k/\Spec A_k}) \to H^{n-2}(Y; \Omega^1_Y(\log D)\otimes  \omega_Y)$$
is surjective as well. Hence  $ \mathbf{Def}_{(Y,D)}$ is  unobstructed.

Finally, since $\omega_Y^{-1}$ is nef and big and $Y$ has rational singularities, the   Kawamata-Viehweg vanishing theorem implies that $H^1(Y; \scrO_Y(D)) =H^1(Y; \omega_Y^{\otimes -m})  =H^1(Y; \omega_Y^{\otimes -m-1}\otimes \omega_Y)=0$ and that $H^2(Y;\scrO_Y) =H^2(Y; \omega_Y^{-1}\otimes \omega_Y) =0$. From the exact sequence
$$0 \to \scrO_Y\to \scrO_Y(D) \to \scrO_D(D) \to 0,$$
it follows that $H^1(D; \scrO_D(D)) = H^1(D; N_{D/Y}) = 0$. Thus $D$ is a stable submanifold of $Y$. Hence, as in Remark~\ref{lastremark}, the morphism $ \mathbf{Def}_{(Y,D)} \to  \mathbf{Def}_Y$ is smooth. Since $ \mathbf{Def}_{(Y,D)}$ is unobstructed, $ \mathbf{Def}_Y$ is  unobstructed as well. 
\end{proof}

\begin{remark}  There is an example of   Sano \cite[Example 2.8]{Sano2} of  a smooth projective variety $Y$ with $\dim Y=3$, such that $\omega_Y^{-1}$ is nef and big but $\mathbb{T}^2_Y = H^2(Y; T_Y) \neq 0$. It is reasonable to expect that such examples exist in the singular case as well.
\end{remark}

\bibliography{zerolimref}

\providecommand{\bysame}{\leavevmode\hbox to3em{\hrulefill}\thinspace}
\providecommand{\MR}{\relax\ifhmode\unskip\space\fi MR }
\providecommand{\MRhref}[2]{%
  \href{http://www.ams.org/mathscinet-getitem?mr=#1}{#2}
}
\providecommand{\href}[2]{#2}
\begin{thebibliography}{CKM88}

\bibitem[CDM24]{ChenDirksM}
Q.~Chen, B.~Dirks, and M.~Musta\c{t}\u{a}, \emph{The minimal exponent and
  {$k$}-rationality for local complete intersections}, J. \'Ec. polytech. Math.
  \textbf{11} (2024), 849--873.

\bibitem[CKM88]{CKM}
Herbert Clemens, J\'anos Koll\'ar, and Shigefumi Mori, \emph{Higher-dimensional
  complex geometry}, Ast\'erisque (1988), no.~166, 144.

\bibitem[FL24a]{FL22d}
R.~Friedman and R.~Laza, \emph{The higher {D}u {B}ois and higher rational
  properties for isolated singularities}, J. Algebraic Geom. \textbf{33}
  (2024), no.~3, 493--520.

\bibitem[FL24b]{FL22c}
\bysame, \emph{Higher {D}u {B}ois and higher rational singularities}, Duke
  Math. J. \textbf{173} (2024), no.~10, 1839--1881, Appendix by Morihiko Saito.

\bibitem[FL25a]{FL25a}
\bysame, \emph{Deformations of {C}alabi–{Y}au varieties with isolated log
  canonical singularities}, International Mathematics Research Notices
  \textbf{2025} (2025), no.~10, rnaf112.

\bibitem[FL25b]{FL}
\bysame, \emph{Deformations of singular {F}ano and {C}alabi-{Y}au varieties},
  J. Differential Geom. \textbf{131} (2025), no.~1, 65--131.

\bibitem[FM99]{FanMan}
B.~Fantechi and M.~Manetti, \emph{On the {$T^1$}-lifting theorem}, J. Algebraic
  Geom. \textbf{8} (1999), no.~1, 31--39.

\bibitem[Gro25]{gross_defcy}
M.~Gross, \emph{Deforming {C}alabi-{Y}au threefolds}, arXiv:alg-geom/9506022;
  revision dated October 9, 2025 of a paper published in {M}ath. {A}nn.
  \textbf{308} (1997), no. 2, 187--220.

\bibitem[Har77]{Hartshorne}
R.~Hartshorne, \emph{Algebraic geometry}, Graduate Texts in Mathematics, No.
  52, Springer-Verlag, New York-Heidelberg, 1977.

\bibitem[Iac15]{Iacono}
D.~Iacono, \emph{Deformations and obstructions of pairs {$(X,D)$}}, Int. Math.
  Res. Not. IMRN (2015), no.~19, 9660--9695.

\bibitem[Ima25]{Imagi}
Y.~Imagi, \emph{Deformations of compact {C}alabi-{Y}au conifolds},
  arXiv:2504.03243, 2025.

\bibitem[Ish85]{Ishii85}
S.~Ishii, \emph{On isolated {G}orenstein singularities}, Math. Ann.
  \textbf{270} (1985), no.~4, 541--554.

\bibitem[Kaw85]{KawamataFiber}
Y.~Kawamata, \emph{Minimal models and the {K}odaira dimension of algebraic
  fiber spaces}, J. Reine Angew. Math. \textbf{363} (1985), 1--46.

\bibitem[Kaw92]{Kawamata}
\bysame, \emph{Unobstructed deformations. {A} remark on a paper of {Z}. {R}an},
  J. Algebraic Geom. \textbf{1} (1992), no.~2, 183--190.

\bibitem[Kaw95]{Kawamata2}
\bysame, \emph{Unobstructed deformations. {II}}, J. Algebraic Geom. \textbf{4}
  (1995), no.~2, 277--279.

\bibitem[Kaw97]{Kawamata-E}
\bysame, \emph{Erratum on: ``{U}nobstructed deformations. {A} remark on a paper
  of {Z}. {R}an"}, J. Algebraic Geom. \textbf{6} (1997), no.~4, 803--804.

\bibitem[KKP08]{KKP}
L.~Katzarkov, M.~Kontsevich, and T.~Pantev, \emph{Hodge theoretic aspects of
  mirror symmetry}, From {H}odge theory to integrability and {TQFT}
  tt*-geometry, Proc. Sympos. Pure Math., vol.~78, Amer. Math. Soc.,
  Providence, RI, 2008, pp.~87--174.

\bibitem[Kol97]{Kollar}
J.~Koll\'{a}r, \emph{Singularities of pairs}, Algebraic geometry---{S}anta
  {C}ruz 1995, Proc. Sympos. Pure Math., vol.~62, Amer. Math. Soc., Providence,
  RI, 1997, pp.~221--287.

\bibitem[Kun86]{Kunz}
E.~Kunz, \emph{K\"{a}hler differentials}, Advanced Lectures in Mathematics,
  Friedr. Vieweg \& Sohn, Braunschweig, 1986.

\bibitem[Nam94]{namtop}
Y.~Namikawa, \emph{On deformations of {C}alabi-{Y}au {$3$}-folds with terminal
  singularities}, Topology \textbf{33} (1994), no.~3, 429--446.

\bibitem[Nam02]{NamikawaSugaku}
\bysame, \emph{Calabi-{Y}au threefolds and deformation theory (translation of
  {S}\=ugaku {\bf 48} (1996), no. 4, 337--357)}, Sugaku Expositions \textbf{15}
  (2002), no.~1, 1--29.

\bibitem[PS08]{PS}
C.~A.~M. Peters and J.~H.~M. Steenbrink, \emph{Mixed {H}odge structures},
  Ergebnisse der Mathematik und ihrer Grenzgebiete. 3. Folge. A Series of
  Modern Surveys in Mathematics, vol.~52, Springer-Verlag, Berlin, 2008.

\bibitem[PSV24]{PSV}
M.~Popa, W.~Shen, and A.~Vo, \emph{Injectivity and vanishing for the {D}u
  {B}ois complexes of isolated singularities}, arXiv:2409.18019, 2024.

\bibitem[Ran92a]{Ran}
Z.~Ran, \emph{Deformations of {C}alabi-{Y}au {K}leinfolds}, Essays on mirror
  manifolds, Int. Press, Hong Kong, 1992, pp.~451--457.

\bibitem[Ran92b]{Ran-1}
\bysame, \emph{Deformations of manifolds with torsion or negative canonical
  bundle}, J. Algebraic Geom. \textbf{1} (1992), no.~2, 279--291.

\bibitem[Ran95]{Ran0}
\bysame, \emph{Hodge theory and deformations of maps}, Compositio Math.
  \textbf{97} (1995), no.~3, 309--328.

\bibitem[San14]{Sano2}
T.~Sano, \emph{Unobstructedness of deformations of weak {F}ano manifolds}, Int.
  Math. Res. Not. IMRN (2014), no.~18, 5124--5133.

\bibitem[Ser06]{Sernesi}
E.~Sernesi, \emph{Deformations of algebraic schemes}, Grundlehren der
  mathematischen Wissenschaften [Fundamental Principles of Mathematical
  Sciences], vol. 334, Springer-Verlag, Berlin, 2006.

\bibitem[Ste83]{Steenbrink}
J.~H.~M. Steenbrink, \emph{Mixed {H}odge structures associated with isolated
  singularities}, Singularities, {P}art 2 ({A}rcata, {C}alif., 1981), Proc.
  Sympos. Pure Math., vol.~40, Amer. Math. Soc., Providence, RI, 1983,
  pp.~513--536.

\bibitem[Ste97]{SteenbrinkDB}
\bysame, \emph{Du {B}ois invariants of isolated complete intersection
  singularities}, Ann. Inst. Fourier (Grenoble) \textbf{47} (1997), no.~5,
  1367--1377.

\bibitem[Ten26]{Tenie}
A.~Tenie, \emph{Global smoothings of singular {F}ano and {C}alabi-{Y}au
  varieties}, arXiv:2602.07615, 2026.

\bibitem[Tia92]{Tian}
G.~Tian, \emph{Smoothing {$3$}-folds with trivial canonical bundle and ordinary
  double points}, Essays on mirror manifolds, Int. Press, Hong Kong, 1992,
  pp.~458--479.

\end{thebibliography}
\end{document}